\title{Obstacle Numbers of Planar Graphs\thanks{P. Ossona de Mendez  was supported by grant ERCCZ LL-1201 
 and by the European Associated Laboratory ``Structures in
Combinatorics'' (LEA STRUCO).
P. Valtr was supported by project CE-ITI no. P202/12/G061 of the Czech Science Foundation (GA\v CR).
}}
\titlerunning{Obstacle Numbers of Planar Graphs} 
\author{John~Gimbel\inst{1} \and Patrice~Ossona~de~Mendez\inst{2} (0000-0003-0724-3729) \and Pavel Valtr\inst{3} (0000-0002-3102-4166)}
\institute{Department of Mathematics and Statistics, 
University of Alaska Fairbanks, Fairbanks, U.S.A.
  \email{jggimbel@alaska.edu}
  \and 
Centre d'Analyse et de Math\'ematiques Sociales (CNRS, UMR 8557), Paris, France and
     Computer Science Institute of 
     Charles University (IUUK), Prague, Czech Republic,
  \email{pom@ehess.fr}
  \and 
  Department of Mathematics of 
  Charles University (KAM) and CE-ITI, Prague, Czech Republic, 
  \email{valtr@kam.mff.cuni.cz}}
\authorrunning{J. Gimbel, P. Ossona de Mendez, and P. Valtr} 
\newcommand{\pobs}{{\text{\rm pobs}}}
\newcommand{\obs}{{\text{\rm obs}}}
\newcommand{\trfaces}{\text{\rm tr-faces}}
\newcommand{\conv}{{\text{\rm conv}}}
\begin{document}

\maketitle

\begin{abstract}
Given finitely many connected polygonal obstacles $O_1,\dots,O_k$ in the plane and a set $P$ of points in general position and not in any obstacle, the {\em visibility graph} of $P$ with obstacles $O_1,\dots,O_k$ is the (geometric) graph with vertex set $P$, where two vertices are adjacent if the straight line segment joining them intersects no obstacle.

 The {\em obstacle number} of a graph $G$ is the smallest integer $k$  such that $G$ is the visibility graph of a set of points with $k$ obstacles. If $G$ is planar, we define the
 {\em planar obstacle number} of  $G$ by further requiring that the visibility graph has no crossing edges (hence that it is a planar geometric drawing of $G$).
 
 In this paper, we prove that the maximum planar obstacle number of a planar graph of order $n$ is $n-3$, the maximum being attained (in particular) by maximal bipartite planar graphs. 
  
 This displays a significant difference with the standard obstacle number, as we prove that the obstacle number of every  bipartite planar graph (and more generally in the class PURE-2-DIR of intersection graphs of straight line segments in two directions) of order at least $3$ is $1$.
 \end{abstract}

\section{Introduction}
 
 Let $O_1,\dots,O_k$ be closed connected polygonal obstacles in the plane, and let $P$ be a finite set of points not in any obstacle.
 We assume that all the points in $P$ and all vertices of the polygons  $O_1,\dots,O_k$  are in {\em general position}, that is that no three of them are on a line. Note that by considering some $\epsilon$-neighborhood of the obstacles, we see that considering closed or open obstacles makes no difference (thanks to our general position assumption). 
 Also, allowing or forbidding holes in obstacles makes no difference, as holes disappear if we drill a narrow passage from the outside boundary of an obstacle
 to each hole inside the obstacle.
 
The {\em visibility graph} of $P$ with obstacles $O_1,\dots,O_k$ is the geometric graph with vertex set $P$, where 
two vertices $u,v\in P$ are connected by an edge  (geometrically represented by the segment $uv$) if the segment $uv$ does not meet any of the obstacles. Visibility graphs have been extensively studied, see \cite{de2000computational,Ghosh2007,ORourke1997,o1999open,urrutia2000}. 

Closely related to visibility representation, Alpert {\em et al.}  \cite{Alpert2010} introduced the {\em obstacle number} of a graph $G$, which is the minimum number of obstacles in a visibility representation of $G$. For instance, it is known that bipartite graphs can have arbitrarily large obstacle number \cite{pach2011structure}. 
Moreover it was shown in \cite{dujmovic2013obstacle} that there are graphs on $n$ vertices with obstacle number at least $\Omega(n/(\log \log n)^2)$. 
Although there is a trivial quadratic upper bound for the obstacle number, it is an open problem whether the obstacle number has a linear upper bound \cite{Alpert2010,ghosh2013unsolved}. An unexpected, almost linear upper bound ($2n\log n$) was recently obtained in \cite{balko2015drawing}.
For related complexity issues we refer the reader to \cite{chaplick2016obstructing,sarioz2011approximating}.

 However the situation changes when one restricts his attention to planar graphs. Finding a planar graph with obstacle number greater than one was an open problem \cite{Alpert2010,ghosh2013unsolved} until it was recently
 proved \cite{berman2016graphs}  that the icosahedron has obstacle number $2$.
  It is still an interesting open problem to decide whether the obstacle number of planar graphs can be bounded from
above by a constant. This has been verified for outerplanar graphs by Alpert {\em et al.}  \cite{Alpert2010}, who proved that
every outerplanar graph has obstacle number at most $1$ (see also \cite{Fulek2013}).
In this paper we complement this partial result by proving that planar bipartite graphs have obstacle number at most $1$.

When considering a planar graph $G$, the existence of a planar visibility representation of $G$
suggests another definition of the obstacle number. Here
{\em planar visibility representation} means a geometric visibility graph in which no two edges cross (see Fig~\ref{fig:ex1}).
We define a new graph invariant, the {\em planar obstacle number} of $G$, as the minimum number of obstacles in a planar visibility representation of $G$.
(Note that in this context, it will be convenient to assume that obstacles are {\em domains}, that are open connected sets.)
%
%

\begin{figure}
	\begin{center}
		\includegraphics[width=.5\textwidth]{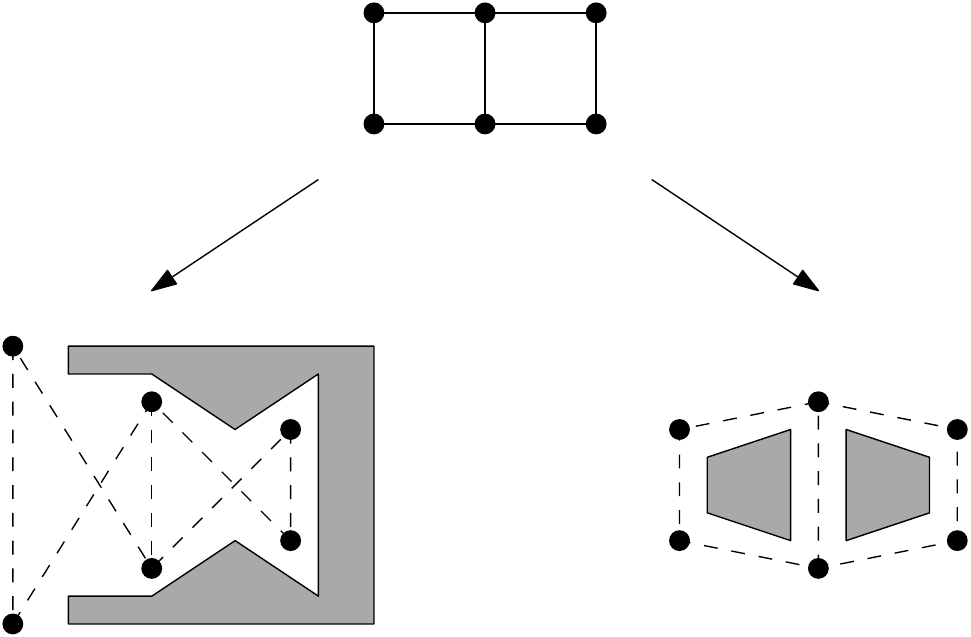}
	\end{center}
	\caption{Example of a graph having a visibility representation with  one obstacle (on the left) and a planar visibility representation with two obstacles (on the right).}
	\label{fig:ex1}
\end{figure}

\section{Our results}
 	For a planar graph $G$, {\em the planar obstacle number of $G$\/},
denoted by $\pobs(G)$, is defined as the minimum number of obstacles
needed to realize $G$ by a straight-line planar drawing of $G$ and a set of obstacles.
 	
 	For $n\in\mathbb{N}$ we further define
 	$$
 	\pobs(n)=\max_{\substack{|G|=n\\G\text{ planar}}}\pobs(G).
 	$$
 
The above definitions are correct due to F\'ary's theorem~\cite{fary}
which says that every planar graph has a straight-line planar drawing; we call
any such drawing a {\em F\'ary drawing\/}.
Slightly perturbing the set of vertices if necessary, we get a F\'ary drawing
with vertices lying in general position. 
Then it is possible to realize the graph by putting a small obstacle
in each face, and this bound can be reduced by one if $G$ is not a tree (in which case, for some embedding of $G$, one can remove the obstacle of the outer face). Hence for a graph $G$ with $n$ vertices and $m$ edges this gives the bound $\pobs(G)\leq \min(m-n+1,1)$.

Obviously, $\pobs(2)=\pobs(3)=1$. In the following theorem we determine $\pobs(n)$ for all $n\ge4$.
 \begin{theorem}\label{t:n-3}
  For $n\ge4$,
    \begin{equation}
    \label{eq:n-3}
 	\pobs(n)=n-3.    	
    \end{equation}
 \end{theorem}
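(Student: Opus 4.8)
The plan is to split the equality~\eqref{eq:n-3} into the upper bound $\pobs(n)\le n-3$ (every planar $G$ on $n$ vertices can be realized with at most $n-3$ obstacles) and the lower bound $\pobs(n)\ge n-3$ (some planar $G$ needs at least $n-3$). Both rest on one structural observation: in a planar visibility representation every edge of $G$ must stay visible, so no obstacle may meet an edge-segment; since an open connected set disjoint from all edges lies inside a single face of the F\'ary drawing, each obstacle is confined to one face. Consequently one may take at most one (``fat'') obstacle per face, each blocking exactly the non-edge segments that enter that face, and $\pobs(G)$ equals the minimum, over F\'ary drawings of $G$, of the number of faces one must select so that every non-edge segment passes through the interior of a selected face.

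For the lower bound I would use the \emph{maximal bipartite planar graphs}. Such a graph $G$ is a quadrangulation: it has $2n-4$ edges and $n-2$ faces, hence exactly $n-3$ bounded faces, each bounded by a $4$-cycle $abcd$ whose diagonals $ac$ and $bd$ join vertices in the same part and are therefore non-edges. In any F\'ary drawing at least one diagonal of each bounded quadrilateral lies inside that face (both if it is drawn convex, the diagonal through the reflex vertex otherwise); this diagonal is a non-edge whose whole segment stays inside a single bounded face, so any obstacle blocking it must lie in that face. The $n-3$ bounded faces are distinct and each forces a distinct obstacle, whence $\pobs(G)\ge n-3$; combined with the upper bound this yields $\pobs(G)=n-3$ and the lower bound on $\pobs(n)$.

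For the upper bound I would argue by induction on $n$, adding the vertices one at a time in a \emph{canonical (shelling) order} of $G$ (reducing first to the $2$-connected case, where such an order exists and each newly added vertex is attached to a contiguous interval of the current outer boundary). The base case $n=4$ gives $\pobs\le 1=n-3$. For the step, place the new vertex $v$ just outside the current convex outer boundary, join it by straight segments to its (contiguous) neighbours, and add a single new obstacle: a thin ``curtain'' hung just behind the arc of neighbours, spanning the angular window that the arc subtends at $v$. Every non-neighbour of $v$ is reached from $v$ by a segment crossing that window behind the arc and is thus blocked, while the neighbours, lying in front of the curtain, remain visible; no previously realized visibility is disturbed. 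This charges one obstacle to each added vertex, giving $\pobs(G)\le 1+(n-4)=n-3$.

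The main obstacle is the upper bound, specifically the geometric step that \emph{one} new obstacle suffices for each added vertex. One must guarantee a drawing in which, at the moment $v$ is inserted, its neighbours occupy a contiguous angular interval as seen from $v$ and every non-neighbour lies behind that interval, so that a single connected curtain separates $v$ from all non-neighbours at once without blocking any incident edge. Controlling this for \emph{all} planar graphs---in particular the dense, near-triangulated ones, for which the naive ``one obstacle per bounded face'' count overshoots $n-3$ badly---is where the real work lies; the convexity of the successive outer boundaries and the contiguity provided by the canonical order are the tools I would rely on to push it through.
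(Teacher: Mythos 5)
Your lower bound is correct and coincides with the paper's: the paper takes $K_{2,n-2}$ (a quadrangulation), notes that by Euler's formula every F\'ary drawing of it has $n-3$ inner faces, each a quadrilateral whose interior contains a diagonal joining two non-adjacent vertices, so each inner face forces its own obstacle. (Minor quibble: an edge-maximal bipartite planar graph need not have $2n-4$ edges --- stars are edge-maximal --- so you should say ``quadrangulation'' or simply exhibit $K_{2,n-2}$; this does not affect the argument.) The upper bound, however, contains a genuine gap, exactly at the step you flagged as ``the real work.'' Your curtain construction is inconsistent with the structural fact you state in your own first paragraph: an obstacle is a connected set disjoint from all edges, hence it lies inside a \emph{single} face. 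When $v$ is attached to $k\ge 3$ consecutive boundary vertices $w_1,\dots,w_k$, the edges $vw_1,\dots,vw_k$ cut the region between $v$ and the arc into $k-1$ distinct new faces, and the strip just behind the arc is likewise cut into several old faces by the old edges incident to the interior vertices of the arc; a single connected curtain spanning the whole angular window would have to cross edges, which is forbidden. Confining the new obstacle to one of these faces leaves unblocked the segments from $v$ through all the other new faces, as well as the segments from $v$ through the new outer face to boundary non-neighbours beyond the arc, and nothing in your induction guarantees that the old faces those segments enter contain obstacles. Indeed, no per-vertex charging scheme can work on its own: a triangulation has $2n-5$ inner faces while you have only $n-3$ obstacles, so roughly half of the faces must end up empty, and one must rule out ``tunnels'' of empty faces through which non-adjacent vertices see each other --- already two adjacent empty triangles forming a convex quadrilateral violate visibility when the two opposite vertices are non-adjacent. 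This is a global condition that your local curtain argument never addresses.

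This is precisely what the paper's machinery is for, and it is where the approaches genuinely diverge. The paper completes an arbitrary F\'ary drawing to a triangulation $T$, $2$-colors the faces of $T$ so that the set $S$ of edges lying between two same-colored faces is \emph{sparse} (Lemma~\ref{l:2coloring}), redraws $T$ so that all edges of $S$ are \emph{concave} (Lemma~\ref{l:key}, the technical heart, proved by induction using planar contractions), and then leaves empty exactly the triangular faces of one color class, placing obstacles in the remaining faces --- at most $\lfloor(2n-5)/2\rfloor=n-3$ of them. Lemma~\ref{l:rep} then shows that no segment between non-adjacent vertices survives: it cannot stay within one empty triangle (its endpoints would be adjacent), it cannot cross exactly two (their common edge lies in $S$, hence is concave, so the union is a non-convex quadrilateral), and it cannot cross three or more (the middle triangle would have two edges in $S$, contradicting sparseness). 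Any repair of your induction would have to carry concavity/sparseness invariants of exactly this kind through the vertex additions, which is essentially the content of the paper's proof of Lemma~\ref{l:key}.
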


Also we can bound $\pobs(G)$ by means of the number of edges, vertices and triangular faces. For a planar graph $G$, let $\trfaces(G)$ be the maximum number of triangular faces in a planar drawing of $G$.
It is easily checked that one can drop an obstacle in any set of non-adjacent triangles, which improves the trivial upper bound to
\[
\pobs(G)\leq \min(m-n+1-\lceil \trfaces(G)/3\rceil,1).
\] 

\begin{theorem}
	\label{t:mn}
For every connected planar graph $G$  with $n\ge 5$ vertices and $m$ edges, the following inequalities hold:
\begin{equation}
\label{eq:mn}
        \max\{m-n+1-\trfaces(G),1\}\leq \pobs(G)\leq\max\{m-n+1-\lfloor\trfaces(G)/2\rfloor,1\}.
\end{equation}
More precisely, if $\trfaces(G)\leq 1$ we have
\begin{equation}
\label{eq:tf}
\pobs(G)=\begin{cases}
	\max\{m-n+1,1\}&\text{if }\trfaces(G)=0\\
	\max\{m-n,1\}&\text{if }\trfaces(G)=1\\
\end{cases}
\end{equation}
\end{theorem}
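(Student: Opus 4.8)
The plan is to prove the two inequalities of \eqref{eq:mn} essentially independently, and to read off \eqref{eq:tf} as the special cases $\trfaces(G)\in\{0,1\}$ where the two bounds meet. Throughout write $b=m-n+1$, which by Euler's formula is the number of bounded faces of \emph{any} straight-line planar drawing of the connected graph $G$. The single structural fact underlying everything is that in a planar visibility representation no obstacle can meet an edge of $G$ (edges must stay visible), so each obstacle, being a connected set avoiding the drawn $1$-skeleton, is confined to the interior of a single face.

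\textbf{Lower bound.} I would first establish the key lemma: in any planar visibility representation of $G$, every bounded face that contains no obstacle is a triangle. Indeed, if a bounded face $F$ has at least four vertices on its boundary then (treating $F$ as a simple polygon) it has a diagonal, i.e.\ two boundary vertices $u,v$ whose open segment lies inside $F$; if $F$ held no obstacle this segment would be unobstructed, forcing $uv$ to be an edge of the realized graph, which is impossible since an edge cannot run through the interior of a face. Hence the number of obstacles is at least the number of non-triangular bounded faces, namely $b-t'$, where $t'$ is the number of triangular faces in this particular drawing; since $t'\le\trfaces(G)$ and (because a planar graph on $n\ge5$ vertices is not complete, hence has a non-edge) at least one obstacle is always needed, this yields $\pobs(G)\ge\max\{m-n+1-\trfaces(G),1\}$, the left inequality of \eqref{eq:mn}. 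For $\trfaces(G)=0$ and $\trfaces(G)=1$ this already gives the lower halves of \eqref{eq:tf}, namely $\ge\max\{m-n+1,1\}$ and $\ge\max\{m-n,1\}$ respectively.

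\textbf{Upper bound.} Here I start from the standard construction---a F\'ary drawing with one ``fat'' obstacle nearly filling each bounded face---which realizes $G$ with $b$ obstacles and so settles the case $\trfaces(G)=0$. To improve this I would fix a drawing attaining $t=\trfaces(G)$ triangular faces and delete the obstacles from a suitably chosen set $D$ of triangular faces, keeping fat obstacles in every other bounded face. The geometric content is a removability criterion: the obstacle of a triangular face $T$ may be deleted provided every non-edge segment crossing $T$ is forced to pass substantially through a neighbouring face that still carries an obstacle, which one arranges by shaping those neighbouring obstacles to reach close to the shared edges and corners of $T$. The obstruction to deleting too many is exactly a pair of adjacent deleted triangles $T_1,T_2$ sharing an edge $e$: a non-edge can then cross $e$ while remaining inside $T_1\cup T_2$, escaping every obstacle. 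I would therefore require $D$ to avoid such bad pairs and reduce the optimization to a matching/covering argument on the triangle-adjacency graph, showing $|D|\ge\lfloor t/2\rfloor$ is always attainable; this gives $\pobs(G)\le\max\{m-n+1-\lfloor\trfaces(G)/2\rfloor,1\}$. For $\trfaces(G)=1$ there is a single triangular face and no adjacency constraint, so its obstacle can simply be dropped, saving one and matching the lower bound to give $\pobs(G)=\max\{m-n,1\}$; this completes \eqref{eq:tf}.

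The main obstacle I anticipate is the geometric half of the upper bound: making precise, and uniform over all the non-edges involved, the claim that a triangle's obstacle can be absorbed by its neighbours---this requires controlling segments that merely ``clip a corner'' of $T$ and verifying that fattening the neighbouring obstacles up to (but not onto) the incident edges blocks all of them while blocking no edge of $G$. A secondary technical point is the lower-bound lemma for faces whose boundary walk is not a simple cycle (bridges and cut vertices); there the diagonal argument must be adapted, though such degenerate faces are harmless because the ``$\ge 1$'' term then dominates.
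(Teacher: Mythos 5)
Your lower bound (an obstacle-free inner face must be a triangle, by the diagonal argument) and your treatment of the cases $\trfaces(G)\le 1$ coincide with the paper's own Lemma~\ref{l:trianglefree}. Two details you gloss over do require work and are handled there: one must make the outer face a convex polygon (Tutte's spring theorem) so that no obstacle is needed in the unbounded face, and when $m>n$ one must arrange the triangular face to be an \emph{inner} face, which the paper does by taking a cycle as outer boundary and flipping the components hanging at cut vertices into other inner faces. Also, the ``corner-clipping'' issue you single out as the main anticipated difficulty is actually a non-issue: as noted in the paper's preliminaries, each obstacle kept in a face may be assumed large enough to block every segment between independent vertices that meets that face.

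The genuine gap is in your main upper bound: the claim that an independent set $D$ of pairwise non-adjacent triangular faces with $|D|\ge\lfloor\trfaces(G)/2\rfloor$ ``is always attainable'' is false, so the reduction to a matching/covering argument cannot work. Let $G$ be the Apollonian network obtained from $K_4$ (outer triangle $abc$, center $d$) by stacking a new vertex $e$, $f$, $g$ inside each of the faces $abd$, $bcd$, $cad$. Then $n=7$, $m=15$, $G$ is maximal planar, every face of every embedding is a triangle, so $\trfaces(G)=10$ and \eqref{eq:mn} demands $\pobs(G)\le 9-5=4$, i.e.\ five of the nine inner faces must carry no obstacle. But with outer face $abc$ the nine inner faces split into the three triples $\{abe,bde,ade\}$, $\{bcf,bdf,cdf\}$, $\{cag,cdg,adg\}$, each pairwise adjacent, so at most three pairwise non-adjacent inner faces exist; taking a stacked face such as $abe$ as the outer face one gets at most four. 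In every embedding your construction thus keeps at least five obstacles, one more than the theorem allows. This is precisely why the paper's argument is so much heavier: it \emph{does} leave adjacent triangles simultaneously obstacle-free, dismissing the obstruction you treated as absolute by protecting the shared edge geometrically rather than avoiding it combinatorially. Concretely, it $2$-colors the faces of a completing triangulation so that the edges between same-colored faces form a sparse set $S$ (Lemma~\ref{l:2coloring}), builds a special F\'ary drawing in which every edge of $S$ is concave (Lemma~\ref{l:key}, the technical core, proved by induction via planar edge contraction), and then checks (Lemma~\ref{l:rep}) that a segment between non-adjacent vertices can neither cross a concave edge while staying in its two incident triangles, nor traverse three or more such faces, since that would put two edges of $S$ on the boundary of one triangle, contradicting sparseness. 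Sparseness is essential here: the paper's displayed example of two adjacent triangles with non-convex union shows that pairwise concavity alone does not suffice, so even repairing your approach by asking for concave shared edges inside $D$ would not close the gap.
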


In the proof of this theorem, a key argument is that when two adjacent triangles have a non-convex union, the obstacles in these triangles 
may sometimes be dropped,  but not always as witnessed by the following example:
%
	\begin{center}
		\includegraphics[height=.1\textheight]{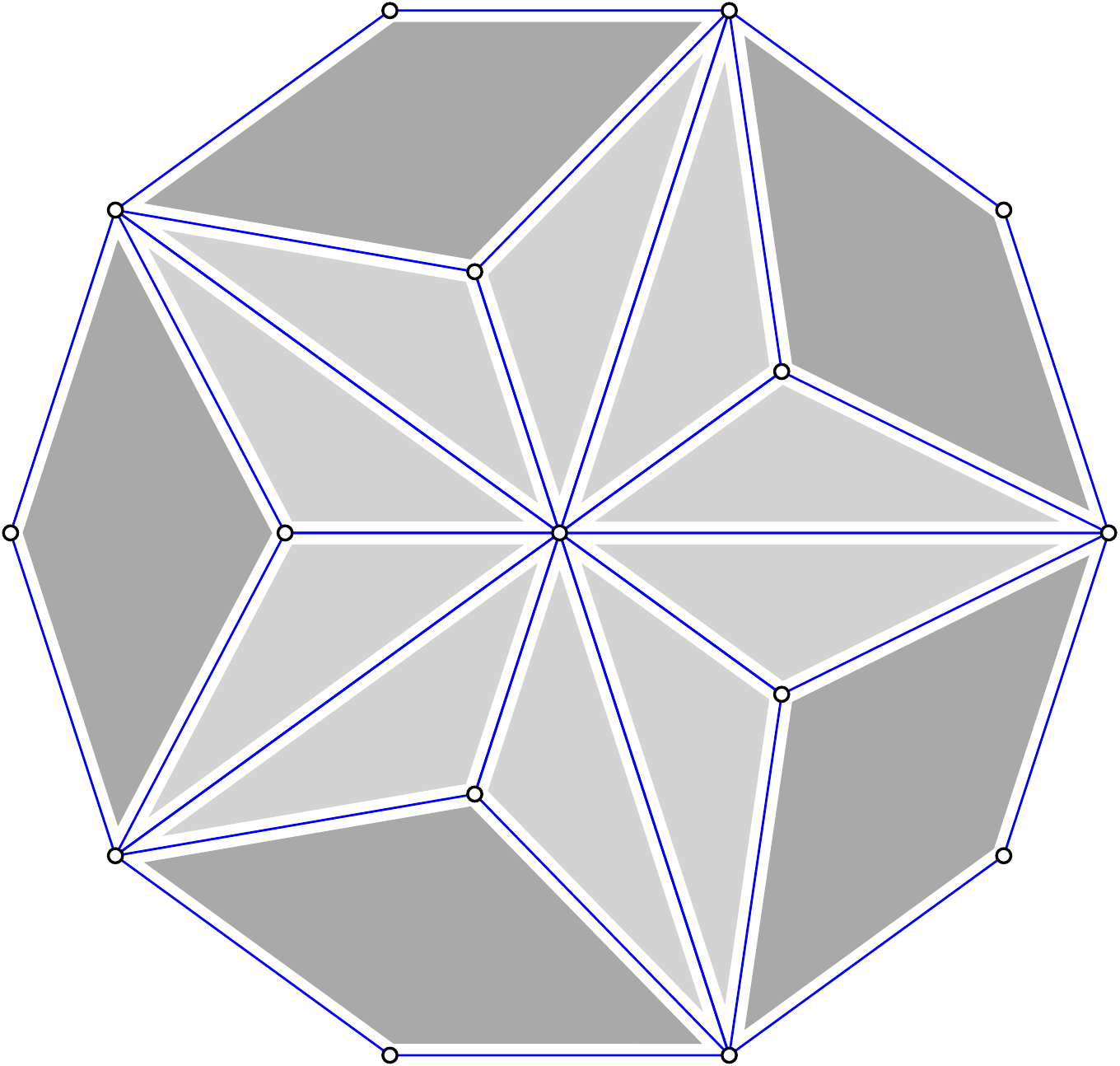}
	\end{center}


Note that a linear time representation algorithm (starting from an embedding of $G$) is easily derived from our proofs. However, choosing an embedding in order to maximize $\trfaces(G)$ is difficult in general, as a reduction from minimum dominating set problem in planar graphs with maximum degree $3$ shows that deciding (for input $(G,k)$) wether $\trfaces(G)\geq m-n-k$ is an NP-hard problem.

%

Planar obstacle number  displays a significant difference with the standard obstacle number, as witnessed by the conjecture that planar graphs have bounded obstacle number.

Using a combinatorial characterization of visibility graphs with a single unbounded obstacle and vertices in convex position (Lemma~\ref{lem:obst1})
  we determine the obstacle  number of PURE-2-DIR graphs, where 
    PURE-2-DIR graphs are intersection graphs of straight line segments with $2$ directions such that any two segments belonging to a same direction are disjoint \cite{km}.

\begin{theorem}
\label{thm:2DIR}
	Let $G$ be a PURE-2-DIR graph (i.e. a graph representable by intersection of straight line segments with $2$ directions with no two segments in a same class intersecting).	 
	Then 
	the obstacle number of $G$ is either $0$ (if $G$ is $K_1$ or $K_2$) or $1$ (otherwise).
\end{theorem}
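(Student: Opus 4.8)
The plan is to separate the two possible values and to obtain the value $1$ through Lemma~\ref{lem:obst1}. First I would settle the easy points. Since any two segments lying in the same direction are disjoint, each of the two direction classes is an independent set, so $G$ is bipartite; in particular the only complete PURE-2-DIR graphs are $K_1$ and $K_2$ (a $K_3$ would force three pairwise-crossing segments, two of which must share a direction and hence be disjoint). A visibility representation with no obstacle realizes exactly the complete graph on its point set, so $\obs(G)=0$ if and only if $G$ is complete, that is $G\in\{K_1,K_2\}$; every other PURE-2-DIR graph has a non-edge and therefore satisfies $\obs(G)\ge 1$. It remains to prove the matching upper bound $\obs(G)\le 1$ for every PURE-2-DIR graph.

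For the upper bound I would fix a representation of $G$ by horizontal and vertical segments in general position and aim to realize $G$ by points in convex position together with a single (unbounded) obstacle, so that Lemma~\ref{lem:obst1} can be invoked. The first task is to read a cyclic order of the vertices off the representation. The natural attempt is to order the horizontal vertices by their height and the vertical vertices by their abscissa, and to place the two classes on two complementary arcs of a convex curve. With such a placement every crossing of the representation should become a visibility, whereas the pairs that must become non-edges are precisely the two families to be hidden by the obstacle: all pairs inside a single class (parallel, hence disjoint segments) and the non-crossing pairs between the two classes.

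The heart of the argument is then to certify that this prescribed pattern of non-edges meets the combinatorial condition of Lemma~\ref{lem:obst1}, i.e. that it can be produced by one connected unbounded obstacle. Concretely I would realize the obstacle as a comb reaching in from infinity, whose teeth are thin walls threaded between consecutive vertices of the chosen cyclic order; each tooth blocks exactly the chords it crosses, and the teeth must be routed so that together they cross every intended non-edge and no crossing pair. The main obstacle is to show that such a routing exists for every PURE-2-DIR graph: one must exploit the defining double-interval crossing rule (a horizontal segment at height $y$ meets a vertical segment at abscissa $x$ exactly when $x$ lies in the horizontal extent of the former and $y$ in the vertical extent of the latter) to separate the within-class pairs and the non-crossing cross-class pairs from the crossing pairs by a single non-self-intersecting comb, thereby matching the hypothesis of Lemma~\ref{lem:obst1}. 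Verifying this compatibility, and in particular checking that hiding all $\binom{|A|}{2}$ and $\binom{|B|}{2}$ same-class chords never forces a crossing pair to be blocked, is the step I expect to require the most care; it may well dictate refining the cyclic order above rather than using the naive two-arc placement.
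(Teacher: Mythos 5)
Your lower bound and your reduction to Lemma~\ref{lem:obst1} follow exactly the paper's route, but the proposal stops short of the one step that actually constitutes the proof, and it misidentifies what that step is. The condition of Lemma~\ref{lem:obst1} is purely combinatorial: one must exhibit functions $p$ and $I$ (points and circular arcs on $\mathbb{S}^1$) such that adjacency is equivalent to the mutual containment $p(u)\in I(v)$ and $p(v)\in I(u)$. Once this is done, the lemma itself supplies the geometric realization; no ``comb reaching in from infinity'' needs to be routed, and worrying about whether ``hiding all same-class chords forces a crossing pair to be blocked'' amounts to re-proving the forward direction of the lemma instead of applying it. What you must verify --- and never do --- is the combinatorial equivalence, and it is precisely here that the PURE-2-DIR structure enters.

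The verification is short, and your ``naive'' two-arc placement needs no refinement. Number the horizontal segments $a_1,\dots,a_p$ from top to bottom and the vertical ones $b_1,\dots,b_q$ from left to right; put $p(a_1),\dots,p(a_p)$ in this order on one arc and $p(b_1),\dots,p(b_q)$ on the complementary arc, and let $I(v)$ be the inclusion-minimal arc containing the points of the neighbours of $v$, so that $I(v)$ lies inside the opposite arc and avoids $p(v)$, which disposes of all same-class pairs automatically. Then $p(a_i)\in I(b_j)$ and $p(b_j)\in I(a_i)$ hold if and only if there exist $i_1\le i\le i_2$ and $j_1\le j\le j_2$ with $a_{i_1},a_{i_2}$ adjacent to $b_j$ and $b_{j_1},b_{j_2}$ adjacent to $a_i$; and this sandwich condition implies adjacency because crossings are governed by intervals: $a_{i_1}$ and $a_{i_2}$ crossing $b_j$ forces the vertical extent of $b_j$ to contain the height of $a_i$, while $b_{j_1}$ and $b_{j_2}$ crossing $a_i$ forces the horizontal extent of $a_i$ to contain the abscissa of $b_j$, so $a_i$ and $b_j$ cross. (The converse is trivial: take $i_1=i_2=i$ and $j_1=j_2=j$.) This two-line check is exactly the content of the paper's proof of Theorem~\ref{thm:2DIR}; without it, your text is a plan for a proof rather than a proof, since the step you defer --- and even suggest may fail for the chosen ordering --- is the entire substance of the argument.
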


  As it is known that every bipartite planar graph is a PURE-2-DIR graph, and more precisely has is the contact graph of a family of horizontal and vertical straight line segments in the plane  \cite{Taxi_janos_t} we immediately deduce from Theorem~\ref{thm:2DIR} the value of the planar obstacle number of  bipartite planar graphs:
\begin{theorem}
	\label{thm:bip}
	The obstacle number of a bipartite planar graph $G$ is either $0$ (if $G$ is $K_1$ or $K_2$) or $1$ (otherwise).
\end{theorem}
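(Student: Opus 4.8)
The plan is to obtain Theorem~\ref{thm:bip} as an immediate corollary of Theorem~\ref{thm:2DIR} combined with the classical segment-contact representation of bipartite planar graphs. There is essentially no new geometry to do here: the entire content is already packaged in Theorem~\ref{thm:2DIR} (whose proof rests on the combinatorial characterization of single-obstacle visibility graphs with vertices in convex position, Lemma~\ref{lem:obst1}), and in the cited representation theorem. So I would proceed in two very short steps, reducing the bipartite planar case to the PURE-2-DIR case and then quoting the earlier theorem verbatim.

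First I would invoke the result of \cite{Taxi_janos_t} that every bipartite planar graph $G$ is the contact graph of a family of horizontal and vertical straight-line segments in the plane. The key observation is that such a contact representation is \emph{already} a PURE-2-DIR representation: the two orientations split the vertex set into the two parts of the bipartition, so if two same-orientation segments were in contact they would create an edge inside one part and contradict bipartiteness. Hence in any such representation the horizontal segments are pairwise disjoint and the vertical segments are pairwise disjoint, which is exactly the defining condition of PURE-2-DIR. This places $G$ squarely inside the class covered by Theorem~\ref{thm:2DIR}.

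Second, I would apply Theorem~\ref{thm:2DIR} directly to $G$, concluding that its obstacle number is $0$ when $G\in\{K_1,K_2\}$ and $1$ otherwise. The only point requiring a line of verification is that the two exceptional cases match up on both sides: $K_1$ and $K_2$ are themselves bipartite and planar, and conversely a bipartite graph equal to $K_1$ or $K_2$ is just $K_1$ or $K_2$, so the dichotomy transfers unchanged. As a sanity check one can note that obstacle number $0$ forces the visibility graph to be complete (with no obstacle every segment is unobstructed), and the only complete bipartite graphs are $K_1$ and $K_2$, which pins down the boundary case. I expect no genuine obstacle in this deduction; the one place to be careful is confirming that the contact representation qualifies as a PURE-2-DIR representation, which the disjointness observation above settles.
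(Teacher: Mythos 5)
Your proposal is correct and follows exactly the paper's own route: cite the result of \cite{Taxi_janos_t} that every bipartite planar graph has a representation by horizontal and vertical segments with same-direction segments disjoint (i.e.\ is a PURE-2-DIR graph), then apply Theorem~\ref{thm:2DIR}. The additional verification that the contact representation qualifies as PURE-2-DIR, and the sanity check on the $K_1,K_2$ boundary cases, are harmless elaborations of what the paper states in one line.
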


\section{Preliminaries}
In this paper, we consider graphs that are finite, simple, and loopless.

A {\em drawing} of a graph $G$ in the plane consists in assigning distinct points of the plane to the vertices of $G$, and arcs connecting points to edges. A drawing is {\em simple} if any two arcs intersect at most once, and it is {\em geometrical} if the arcs are straight line segments.

A {\em planar} graph is a graph that  has a drawing in the plane in which no two arcs cross. Such a drawing is called an {\em embedding} of the graph. A planar graph embedded in the plane is called a {\em plane graph}.
A {\em face} of a plane graph is a connected component of the complement of the drawing in the plane. 
A {\em face} is {\em bounded} (or an {\em inner face}) if it has a bounded diameter, it is {\em unbounded} otherwise. Note that every plane graph  has exactly one unbounded face, which is called the {\em outer face}. The traversal of the boundary of a face then produces a sequence of edges (precisely of arcs representing edges), whose length is the {\em length} of the face. Note that if the graph is $2$-connected, the boundary of each face is a cycle, whose length is the length of the face (in general, bridges are counted twice).  Each face of a (loopless simple) $2$-connected graph has thus length at least $3$.
A {\em topological embedding} of a planar graph $G$ in the plane is the equivalence class of an embedding of $G$ in the plane under homeomorphisms of the plane. As they are invariant by every homomorphism of the plane, one can speak about the faces, inner faces, and outer face of a topological embedding of a planar graph in the plane.
It is well known that a topological embedding in the plane is fully determined by the cyclic order of the incident edges around each vertex of the graph and the specification of the outer face. For more informations and background on topological graph embeddings we refer the reader to \cite{moharthom}.

Every (simple) planar graph has  a {\em F\'ary drawing}, that is a crossing free geometric drawing \cite{fary} (see also \cite{steinitz1916polyeder}), and that such a drawing --- with points in a linear size grid --- can be computed in linear time \cite{Taxi_v_pack,Taxi_v_pack2}.  A planar drawing of a graph in which every face is a $3$-cycle (i.e. has length $3$) is a {\em triangulation}.
A {\em maximal planar graph} is a planar graph $G$ to which no edge can be added while preserving the planarity. It is easily checked that a planar graph is maximal if and only if  some/every planar drawing of it is a triangulation.

By definition, every planar visibility representation of a graph $G$ defines a F\'ary drawing of $G$,  each obstacle
lying inside a single face of the drawing. It follows that
any obstacle can be extended to any connected subset of the interior of the face.
 Thus, when minimizing the number of obstacles
we may assume that every face $f$ contains at most one obstacle, and that if $f$ contains an obstacle, this obstacle is large enough to intersect every segment  intersecting $f$ which
connects two independent vertices of $G$.
Note that such a planar visibility representation is thus characterized by a F\'ary drawing of $G$ and the set of its faces containing an obstacle.

Observe that if $F(G)$ is a F\'ary drawing of a connected planar graph $G$
and $H$ is an inner face of $F(G)$ which is not bounded by a $3$-cycle
then some straight-line diagonal of $H$ lies entirely in $H$ and therefore
any realization of $G$ using $F(G)$ must contain an obstacle lying inside the face $H$.

 It follows from Tutte's spring theorem~\cite{tutte} that if a cycle $C$ of a planar graph $G$ bounds some face of some planar drawing of $G$, then $G$ has a F\'ary drawing
such that the boundary of the outer face is the cycle $C$ drawn as the boundary of a convex polygon.

\section{Proof of Theorems~\ref{t:n-3} and \ref{t:mn}}
In this section we derive Theorems~\ref{t:n-3} and \ref{t:mn} from Lemma~\ref{l:key} stated below.
Lemma~\ref{l:key} is then proved in the next section.

We first prove a weaker statement than Theorem~\ref{t:mn}, which determines the planar obstacle number
for connected planar graphs not admitting more than one triangular face in any planar drawing, as given by \eqref{eq:tf}, as well as the lower bound of \eqref{eq:mn}.

 \begin{lemma}\label{l:trianglefree}
For every connected planar graph $G$  with $n\ge 5$ vertices and $m$ edges, it holds
        $$\max\{m-n+1-\trfaces(G),1\}\leq \pobs(G)\leq\max\{m-n+1,1\},$$
where equality with the upper bound holds if and only if $m\leq n$ or $\trfaces(G)=0$.

Thus if $\trfaces(G)\leq 1$ then
        $$\pobs(G)=\max\{m-n+1-\trfaces(G),1\}.$$
 \end{lemma}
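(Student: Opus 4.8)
The plan is to establish the two bounds separately, relying on the structural fact recalled in the preliminaries — that in any planar visibility representation every non-triangular inner face must contain an obstacle — together with the trivial realization that places one obstacle in each inner face. The equality analysis and the $\trfaces(G)\le 1$ formula then follow by matching the two bounds, with one extra idea needed to save a single obstacle when a triangular face is present.

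For the lower bound I would start from an optimal planar visibility representation of $G$, which fixes a F\'ary drawing $F$ together with the set $S$ of faces carrying an obstacle, $|S|=\pobs(G)$. Since $G$ is connected, Euler's formula gives exactly $m-n+1$ inner faces. By the cited observation, every inner face of $F$ not bounded by a triangle contains an obstacle, so $|S|$ is at least the number of non-triangular inner faces, i.e. $|S|\ge (m-n+1)-t$, where $t$ is the number of triangular inner faces of $F$. As $t$ is at most the total number of triangular faces of $F$, which is at most $\trfaces(G)$ by definition, this yields $\pobs(G)\ge m-n+1-\trfaces(G)$. Finally $\pobs(G)\ge 1$, since for $n\ge 5$ a planar graph is never complete, so $G$ has a non-edge, whereas a representation with no obstacle realizes only $K_n$. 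Together these give $\pobs(G)\ge\max\{m-n+1-\trfaces(G),1\}$.

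For the upper bound I would fix, via Tutte's spring theorem \cite{tutte}, a F\'ary drawing whose outer face is a convex polygon, and place one sufficiently large obstacle in each of the $m-n+1$ inner faces. No edge is blocked, as edges lie on face boundaries while obstacles sit in open face interiors; and every non-edge segment $uv$ stays inside the convex hull and, by general position, cannot run along the skeleton, so it must enter some inner face and be blocked there. This gives $\pobs(G)\le m-n+1$, hence $\pobs(G)\le\max\{m-n+1,1\}$ (the boundary case $m=n-1$, i.e. $G$ a tree, being handled by a direct single-obstacle drawing). For the equality dichotomy, if $m\le n$ or $\trfaces(G)=0$ the two bounds already coincide. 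Conversely, if $m>n$ and $\trfaces(G)\ge 1$ I would save one obstacle: choose an embedding realizing a triangular face, re-root it via \cite{tutte} so that this triangle becomes an inner face with the outer face convex, and place obstacles in every inner face except this triangle. A triangle has no diagonal, so no non-edge lies inside it; and any non-edge crossing the triangle must cross one of its bounding edges into an adjacent inner face — it cannot escape into the obstacle-free outer region because both endpoints lie in the convex hull — so it remains blocked. Hence $\pobs(G)\le m-n<m-n+1$, breaking equality. The formula for $\trfaces(G)\le 1$ then follows by comparing the matching bounds.

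The main obstacle I expect is the geometric bookkeeping in the upper-bound realizations: verifying that one obstacle per inner face truly blocks every non-edge (convexity of the outer face is exactly what prevents a non-edge from sneaking through the obstacle-free outer region), and that dropping the obstacle of a single inner triangle is safe because every segment meeting that triangle also meets an obstacle-bearing neighbour. The combinatorial counting via Euler's formula and the equality case analysis are then routine.
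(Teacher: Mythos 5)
Your overall strategy coincides with the paper's own proof: the lower bound comes from the observation that every non-triangular inner face of the F\'ary drawing underlying any realization must carry an obstacle (plus Euler's formula and the fact that a planar graph on $n\ge 5$ vertices is not complete); the upper bound comes from a Tutte drawing with convex outer face and one obstacle per inner face; and the strict inequality when $m>n$ and $\trfaces(G)\ge 1$ comes from sparing the obstacle of a single inner triangular face $t$. Your geometric verification that sparing $t$ is safe --- a non-edge segment meeting $t$ must cross one of its sides into an adjacent \emph{inner} face, since it cannot leave the convex hull of the drawing --- is correct, and is in fact more explicit than what the paper writes down.

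The genuine gap is the sentence ``choose an embedding realizing a triangular face, re-root it via \cite{tutte} so that this triangle becomes an inner face with the outer face convex.'' This is precisely the non-trivial combinatorial step of the lemma, and it cannot be done by simply declaring some other face to be the outer one. Tutte's theorem, as used here, needs the outer face to be bounded by a \emph{cycle}; when $G$ has cut vertices, the boundary of a face is in general only a closed walk, so the components hanging at cut-vertices of the chosen outer cycle $C$ must be flipped into inner faces --- and one must guarantee that they are never forced into $t$ itself, which would destroy $t$ as a face. The paper secures this as follows: by a $K_{3,3}$ argument, at most one face other than $t$ can contain all three vertices $v_1,v_2,v_3$ of $t$; since $m>n$ forces at least three faces, one can choose as outer face a face $f$ missing some vertex of $t$, extract a cycle $C$ from its boundary, and then, at each cut-vertex $u$ of $C$, flip the hanging components into an inner face different from $t$ --- such a face exists at $u$ exactly because $f$ does not contain all of $v_1,v_2,v_3$. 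Without an argument of this kind, your re-rooting step is unjustified: for an unlucky choice of the new outer face the flipping can be forced into $t$. Note also that you flag the ``geometric bookkeeping'' as the expected main difficulty, whereas that part is routine; the real work of the lemma sits in this re-embedding step.
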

\begin{proof}
       For the lower bound, remark that
       every planar visibility representation defines a F\'ary drawing
with $m-n+1$ inner faces. In this drawing, 
every
inner face with length at least $4$ has to contain an obstacle.
Thus if $n\geq 5$ (so that $\pobs(G)>0$) it holds
$\pobs(G)\geq\max\{m-n+1-\trfaces(G),1\}$.

If a planar graph $G$ is acyclic, it obviously holds $\pobs(G)\leq 1$. Otherwise, there exists a planar embedding of $G$ with a face $C$ which is a cycle. Using Tutte's spring embedding, there exists a F\'ary drawing of $G$ whose outer face is a convex polygon. Putting an obstacle in each of the $m-n+1$ inner faces we get a planar visibility representation of $G$. Thus     for every planar graph $G$ on $n\geq5$ vertices it holds
       $$\pobs(G)\leq\max\{m-n+1,1\},$$
       and equality holds if and only if $G$ has no embedding with a triangular face or $m\leq n$. Indeed, assume $m>n$.
Consider an embedding of $G$ with an inner triangular face $t=(v_1,v_2,v_3)$.
At most one more face can contain all of $v_1,v_2,v_3$, for if three faces would contain all of $v_1,v_2,v_3$,  adding a vertex in each of these faces adjacent all of $v_1,v_2,v_3$ would lead to a planar embedding of the non-planar graph $K_{3,3}$.
As $m>n$ the embedding has at least three faces, including one face $f$ which does not contain all of $v_1,v_2,v_3$. We may assume that $f$ is the outer face.
As $G$ is not acyclic, $f$ includes (at least) one cycle $C$.
For each cut-vertex $u$ of $C$, we flip the connected components of $G-u$
that do not contain $C$ into an inner face different from $t$
(such a face exists, as otherwise $f$ would contain all the three vertices of $t$).
After this is done, the outer face is $C$ and $t$ is an inner face.
       Using Tutte's spring embedding one obtains a F\'ary drawing of $G$, in which $C$ is the outer face (drawn as a convex polygon) and $t$ is an inner triangular face. Putting an obstacle in every inner face different from $t$ we get a planar visibility representation of $G$ with $m-n$ obstacles.
\qed\end{proof}
\subsection{Three lemmas}\label{s:lemmas}

Here we state three lemmas used in the proof of both Theorems~\ref{t:n-3} and \ref{t:mn}. We first give several definitions.

 An edge $e$ of a F\'ary drawing of a planar graph $G$ is said to be {\em concave\/} if
 it lies in two triangular faces and the union of these two faces is a non-convex quadrilateral.
 (If $G$ is a triangulation then such an edge is sometimes called a {\em non-flippable edge\/} in the literature.)
 
 Suppose $T$ is a triangulation, $x,y,z$ are three vertices of $T$, and $X$ is a subset of the edge set of $T$.
Then we say that a F\'ary drawing of $T$ is {\em $(X;x,y,z)$-concave\/} if $x,y,z$ are the vertices of the outer face
and all the edges of $X$ are concave.

For a graph $G=(V,E)$, an edge set $E'\subseteq E$ is said to be {\em sparse in $G$\/} if each cycle in $G$ contains
at least two edges of $E\setminus E'$.


\begin{lemma}\label{l:2coloring}
For every triangulation $T$ there is a $2$-coloring of the faces of $T$ such that the set
of edges contained in a pair of faces of the same color is sparse in $T$. 
\end{lemma}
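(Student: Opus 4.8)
The plan is to reformulate the statement through planar duality and reduce it to finding a single well-chosen spanning tree. First I would record a clean reformulation of sparseness. Since every face of $T$ is a triangle and every edge lies in exactly two faces, a monochromatic edge set $E'$ is sparse in $T$ if and only if $(V,E')$ is a forest and no edge of $E\setminus E'$ joins two vertices in the same component of this forest; equivalently, the components of $(V,E')$ are \emph{induced} subtrees of $T$. Indeed, a cycle lying entirely in $E'$ is a cycle meeting $E\setminus E'$ in zero edges, and a cycle meeting $E\setminus E'$ in exactly one edge $f$ consists of $f$ together with an $E'$-path between its endpoints; the two conditions forbid precisely these. In particular every triangular face then contains at most one monochromatic edge, so $E'$ is forced to be a matching in the dual $T^*$.

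Next I would translate the choice of a face $2$-coloring into the cycle space of $T$. A set $F\subseteq E$ is the set of bichromatic edges of some face $2$-coloring if and only if $F$ is an even subgraph of $T$ (each vertex has even degree in $F$): the bichromatic edges form an edge cut $\delta_{T^*}(S)$ of the dual, and by planar duality the edge cuts of $T^*$ are exactly the even subgraphs of $T$. Writing $O$ for the (even-sized) set of odd-degree vertices of $T$, the requirement that $F=E\setminus E'$ be even is precisely that $\deg_{E'}(v)$ be odd exactly for $v\in O$.

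This suggests the following construction. Fix a spanning tree $\mathcal S$ of $T$ and let $M$ be the unique $O$-join inside $\mathcal S$, i.e. the set of tree edges $e$ for which one side of $\mathcal S-e$ carries an odd number of vertices of $O$. Then $F=E\setminus M$ is an even subgraph, so $M$ really is the monochromatic edge set of an honest face $2$-coloring, and because $M\subseteq\mathcal S$ it is automatically a forest: this disposes of monochromatic cycles for free. The components of $M$ are exactly the pieces of $\mathcal S$ left after deleting the edges of $\mathcal S\setminus M$, and every tree edge outside $M$ already separates two such pieces; hence the only possible chords come from non-tree edges, and sparseness is equivalent to the single demand that for every non-tree edge $xy$ the tree path between $x$ and $y$ contains an edge of $\mathcal S\setminus M$.

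The hard part will be choosing $\mathcal S$ so that this last demand holds for all fundamental cycles at once: a tree edge $e$ lies in $\mathcal S\setminus M$ exactly when both sides of $\mathcal S-e$ contain evenly many vertices of $O$, so I must guarantee that every fundamental cycle path crosses such an ``even'' tree edge. I would attempt this either by an extremal choice — take $\mathcal S$ minimizing $|M|$, so that the $M$-components are as small as possible, and remove any surviving chord by a tree exchange, arguing that the procedure terminates — or, should a direct global choice resist, by induction on the number of vertices: delete a vertex of degree at most five (one exists in every triangulation), retriangulate the resulting polygonal face, apply the inductive coloring to the smaller triangulation, and reinsert the vertex while recoloring only the few incident faces and the diagonals of the patch. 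The delicate point of the inductive route is the local recoloring in the degree $4$ and $5$ cases, where the colors must be matched across the differing diagonals without creating a short monochromatic path.
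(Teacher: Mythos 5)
Your setup is correct as far as it goes: the reformulation of sparseness (a forest whose components are induced subtrees of $T$), the identification via planar duality of the monochromatic edge sets of face $2$-colorings with the complements of even subgraphs (equivalently, with $O$-joins, $O$ being the set of odd-degree vertices), and the existence and uniqueness of the $O$-join $M$ inside a spanning tree $\mathcal{S}$ are all right. But the proposal has a genuine gap exactly where you announce ``the hard part'': the requirement that every non-tree edge's fundamental path cross an edge of $\mathcal{S}\setminus M$ \emph{is} the whole content of the lemma, and neither of your two suggested routes is carried out. ``Remove any surviving chord by a tree exchange, arguing that the procedure terminates'' is a plan, not an argument, and the inductive alternative (deleting a vertex of degree at most five, retriangulating, and recoloring locally) is precisely the delicate case analysis you acknowledge and then omit. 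As written, this is an unfinished reduction rather than a proof.

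What makes the gap frustrating is that your own duality translation puts the paper's actual proof one step away, and it is the spanning-tree restriction that blocks you. The paper simply takes the $2$-coloring $\chi$ minimizing the number $|S(\chi)|$ of monochromatic edges and notes that flipping the colors of all faces inside a cycle $C$ replaces $S(\chi)$ by its symmetric difference with $E(C)$; minimality forces $|S(\chi)\cap E(C)|\le\lfloor |E(C)|/2\rfloor\le |E(C)|-2$, which is sparseness. In your language: minimize $|M|$ over \emph{all} $O$-joins, not only those inside a fixed tree; if some cycle $C$ had at least $|E(C)|-1$ edges in $M$, then $M\mathbin{\triangle}E(C)$ would still be an $O$-join (as $E(C)$ is even) of size $|M|+|E(C)|-2|M\cap E(C)|\le |M|-|E(C)|+2<|M|$, a contradiction. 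For what it is worth, your extremal tree variant can also be completed: if $\mathcal{S}$ minimizes $|M(\mathcal{S})|$ over spanning trees and a chord $xy$ has its tree path $P$ contained in $M$, then $|P|\ge 2$ since $T$ is simple, and for $e\in P$ the tree $\mathcal{S}'=\mathcal{S}-e+xy$ has unique $O$-join $M\mathbin{\triangle}\bigl(P\cup\{xy\}\bigr)=(M\setminus P)\cup\{xy\}$ of size $|M|-|P|+1<|M|$, again a contradiction. But this verification is work your proposal still owes, and it only recovers, in more complicated form, the one-line minimization over colorings that the paper performs directly.
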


\begin{proof}
For a $2$-coloring $\chi$ of the faces of $T$, let $S(\chi)$ be the set of edges contained in a pair
of faces of the same color. Let $\overline{\chi}$ be a $2$-coloring of the faces of $T$ minimizing the size of 
$S(\overline{\chi})$, and let $F(T)$ be any fixed (topological) planar drawing of $T$. Any cycle $C$ of $T$
contains at most $\lfloor |E(C)|/2\rfloor\le |E(C)|-2$ edges of $S(\overline{\chi})$,
since otherwise the size of $S(\overline{\chi})$ could be decreased by flipping the colors
of all the faces lying inside the cycle $C$ in the drawing $F(T)$.
Thus, $S(\overline{\chi})$ is sparse and $\overline{\chi}$ satisfies Lemma~\ref{l:2coloring}.
\qed\end{proof}

\begin{lemma}\label{l:key}
 Let $T$ be a triangulation having a face with vertices $a$, $b$, $c$, and let $S\subseteq E(T)$ be a sparse set of edges of $T$.
 Then $T$ has an $(S;a,b,c)$-concave drawing.\qed
\end{lemma}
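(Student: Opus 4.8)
The plan is to begin with a purely local, geometric reformulation of concavity. For an interior edge $uv$ lying in the two triangular faces $uvw$ and $uvx$, the union of these faces is a non‑convex quadrilateral exactly when the segments $uv$ and $wx$ are disjoint, equivalently when the line through the opposite vertices $w,x$ meets the line through $u,v$ \emph{outside} the segment $uv$. I would isolate as a convenient sufficient condition the statement that $w$ and $x$ both lie in the open half‑plane $\{p:\langle p-u,\,v-u\rangle<0\}$ (so that the angles at $u$ in both triangles are obtuse); then the intersection of the two lines falls beyond $u$ and $uv$ is concave. This reduces the whole lemma to placing the vertices so that each edge of $S$ acquires such an ``obtuse corner'', and it makes clear that we may harmlessly assume $S$ contains no edge of the face $abc$ (an outer edge can never be concave).

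Next I would run an induction on $|V(T)|$ through a canonical (shelling) ordering compatible with the chosen face: fix the edge $ab$ on the outer boundary and build $T$ by adding the vertices in an order $v_4,\dots,v_n=c$, where each $v_k$ is joined to a contiguous path $w_0,\dots,w_s$ of the current boundary. Deleting the last vertex $v_n$ yields a smaller triangulation $T'$ on the same outer triangle, and $S':=S\cap E(T')$ is again sparse in $T'$, since every cycle of $T'$ is a cycle of $T$ and hence still meets $E\setminus S$ in at least two edges. The induction hypothesis then supplies an $(S';a,b,c)$‑concave drawing of $T'$. To make reinsertion possible I expect to need a \emph{strengthened} hypothesis: that the outer boundary of $T'$ is realized as a convex polygon (available through Tutte's spring theorem \cite{tutte}) together with control, for each boundary edge, of which side its unique inner apex projects to.

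The reinsertion of $v=v_n$ is the crux, and I expect it to be the main obstacle. We place $v$ in the outer region and join it to $w_0,\dots,w_s$; the edges that newly become interior are the base edges $w_iw_{i+1}$ (lower apex $z_i$ already fixed, upper apex $v$) and the internal spokes $vw_i$ (opposite vertices $w_{i-1},w_{i+1}$), and the single point $v$ must make concave every one of these that belongs to $S$. Here sparseness is indispensable: applied to each triangle $vw_{i-1}w_i$ it forces at most one of its three edges into $S$, so the spokes of $S$ are isolated from the base edges of $S$ within the fan, and each type of edge can be treated by the obtuse‑corner criterion separately. The genuine difficulty is that the concavity of a base edge $w_iw_{i+1}$ \emph{couples} the freshly placed $v$ with the \emph{already fixed} apex $z_i$; this is precisely what the strengthened induction hypothesis (controlling the projection of each inner apex relative to its boundary edge) is designed to neutralize, after which pushing $v$ to infinity along a generically chosen direction $\theta$ turns each required concavity into the condition ``$\theta$ lies in a prescribed arc'', and one argues that the finitely many arcs coming from the $S$‑edges of the fan have a common admissible direction.

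As a fallback I would keep in reserve the standard reduction by contracting a contractible interior edge $e\notin S$ disjoint from $\{a,b,c\}$ (whose existence for $n\ge 5$ one must check), drawing the contracted triangulation $T/e$ with its induced sparse set concave, and then splitting the merged vertex back inside a tiny disk so that all far edges keep their concavity by continuity and only the bounded configuration around $e$ must be re‑examined; in either route the entire weight of the argument sits in the simultaneous realization of the locally created $S$‑edges, which is exactly the place where the sparseness produced by Lemma~\ref{l:2coloring} is used.
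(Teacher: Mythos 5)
Your proposal correctly isolates the crux --- the spokes $vw_i$ of the fan are indeed automatically concave once the current boundary is convex (the admissible wedge for $v$ lies entirely on $w_i$'s side of the chord $w_{i-1}w_{i+1}$), so everything hinges on base edges $w_iw_{i+1}\in S$ whose opposite apex $z_i$ is already frozen --- but the proposal never resolves it. Two concrete problems. First, the reduction step is misstated: deleting $v_n=c$ does \emph{not} yield a triangulation with outer face $abc$; the outer face of $T-v_n$ is the cycle through $a$, $b$ and the neighbours of $c$, so the lemma's induction hypothesis does not apply to $T'$ and the statement must be generalized to internally triangulated convex polygons --- exactly the ``strengthened hypothesis'' you invoke but never formulate. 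Second, sparseness does \emph{not} isolate the base edges from one another: consecutive base edges $w_iw_{i+1}$ and $w_{i+1}w_{i+2}$ lie in different fan triangles, and the $4$-cycle through them and the two outer spokes already has two non-$S$ edges, so both may lie in $S$. Each such edge forbids, for a placement of $v$ at infinity in direction $\theta$, the arc of directions from its fixed apex $z_i$ through the open edge; if $z_i$ sits near its edge this forbidden arc is almost the full outward half-circle, and two nearly collinear base edges in $S$ can make the forbidden arcs swallow essentially the whole recession cone of admissible placements (which must in addition preserve convexity of the new boundary and the apex condition for the new boundary edges $w_0v$, $vw_s$). Excluding this requires quantitative control of the frozen apexes, i.e.\ the unformulated strengthened hypothesis, and maintaining that control is itself a constraint on where $v$ may go; your sketch gives no way out of this circularity, and ``one argues that the arcs have a common admissible direction'' is precisely the claim that carries the whole weight of the lemma.

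Your fallback is also the reverse of what works, and of what the paper does. You contract an interior edge $e\notin S$, but contraction of a \emph{non}-$S$ edge does not preserve sparseness: a cycle of $T/e$ through the contracted vertex lifts to a cycle of $T$ whose two guaranteed non-$S$ edges may include $e$ itself, which disappears under contraction, leaving an image cycle with only one non-$S$ edge (and $T/e$ need not even be simple when $e$ lies on more than two triangles). The paper's induction instead picks an edge $e\in S$, first deletes everything inside the two maximal triangles $t^{++},t^{--}$ containing $e$ (which is what restores simplicity after contraction), contracts $e$ to get $T\odot e$, recurses separately on $T\odot e$, $T^+$ and $T^-$, and reassembles by expanding the contracted vertex into a short segment flanked by skinny copies of the two side drawings so that $e$ itself comes out concave; contracting an $S$-edge is exactly what makes sparseness hereditary, since both non-$S$ edges of any lifted cycle survive. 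So both of your routes stall at the step where the actual content of the lemma lives, and the fallback's choice of contracted edge would have to be inverted before it could even begin to follow the paper's scheme.
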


The proof of Lemma~\ref{l:key} is sketched in Section~\ref{s:lemmakey}; all details will be included in the full version.
Importance of $(S;a,b,c)$-concave drawings clearly appears in the following technical lemma.
 \begin{lemma}
\label{l:rep}
Let $T$ be a planar triangulation, let 
$G$ be a spanning subgraph of $T$ which includes all the edges of a face bounded by a triangle $\{a,b,c\}$ of $T$,
let $\chi$ be a $2$-coloring of the faces of $T$ such that the set 
 $S$  of edges of $T$ contained in a pair of faces of the same color is sparse,  let $P(T)$ be  an $(S;a,b,c)$-concave drawing of $T$, and 
 let $P(G)$ be the F\'ary drawing of $G$ obtained from $P(T)$ by the removal of all the edges of $E(T)\setminus E(G)$.

Then by placing an obstacle in each face of $P(G)$ that is not a triangular inner face with color $2$ we get a planar visibility representation of $G$.
\end{lemma}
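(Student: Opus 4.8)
The plan is to verify the two defining properties of a visibility representation: that every edge of $G$ is unobstructed and that every non-edge of $G$ is blocked by some obstacle. The first property is immediate, since every obstacle lies in the interior of a face of $P(G)$ while every edge of $G$ is drawn on the boundary between faces; thus no edge segment meets an obstacle. For the converse I first record the relevant geometry. Because $a,b,c$ are the vertices of the outer face of the $(S;a,b,c)$-concave drawing, all vertices of $G$ lie inside the triangle $abc$, so every segment joining two vertices stays in the inner region and only inner faces are relevant. The obstacle-free faces are exactly the colour-$2$ triangular inner faces of $P(G)$; each such face is an intact triangle of $T$ (all three of its edges survive in $G$), since merging two triangles of $T$ across a removed edge always produces a region with at least four boundary vertices. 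Consequently a non-edge $uv$ is unobstructed if and only if the open segment $uv$ is covered by the union of these open colour-$2$ triangles together with the edges of $G$ separating them.

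Next I split the non-edges of $G$ into two types. If $uv\in E(T)\setminus E(G)$ is a removed edge, then in $P(G)$ the two triangles of $T$ incident to $uv$ have merged, and $uv$ is an interior diagonal of the resulting face $F$; as $F$ has at least four boundary vertices it is non-triangular and therefore carries an obstacle, and since the interior of $uv$ lies in the interior of $F$ and $u,v$ are independent, this obstacle blocks $uv$. So removed edges need no further argument, and the whole difficulty lies in non-edges $uv\notin E(T)$.

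For such a $uv$, suppose for contradiction that $uv$ is unobstructed. Then the segment passes through a sequence of colour-$2$ intact triangles $\Delta_1,\dots,\Delta_r$, crossing the shared edges $e_1,\dots,e_{r-1}$, each of which lies in $G$ and, being contained in two faces of the same colour, lies in $S$ and is therefore concave in $P(T)$. The case $r=1$ is impossible, because then $u$ and $v$ would be two vertices of a common triangle of $T$, forcing $uv\in E(T)$. The case $r\ge 3$ is ruled out by sparseness: the consecutive crossed edges $e_1,e_2$ are two distinct edges of the triangle $\Delta_2$, so the $3$-cycle bounding $\Delta_2$ contains two edges of $S$ and hence at most one edge of $E(T)\setminus S$, contradicting that $S$ is sparse.

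This leaves the single-crossing case $r=2$, which I expect to be the main obstacle and where concavity is used in an essential way. Here $u$ must be the apex of $\Delta_1$ opposite $e_1$ and $v$ the apex of $\Delta_2$ opposite $e_1$ --- otherwise $u$ or $v$ would be an endpoint of $e_1$ and $uv$ would be a side of one of the two triangles, i.e.\ an edge of $T$. Thus $uv$ is precisely the diagonal of the quadrilateral $\Delta_1\cup\Delta_2$ complementary to the shared edge $e_1$. Since $e_1$ is concave this quadrilateral is non-convex, with its reflex vertex an endpoint of $e_1$, and a short convexity argument then shows that the complementary diagonal lies entirely outside the quadrilateral. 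This contradicts $uv\subseteq\Delta_1\cup\Delta_2$, completing the contradiction; hence every non-edge is blocked, and together with the first paragraph this shows the stated placement realizes $G$. The delicate points to get right are the precise identification of the apexes in the $r=2$ case and the verification that the complementary diagonal of a concave (non-flippable) quadrilateral escapes it; the combinatorial heart --- that sparseness forbids $r\ge 3$ --- is short, but it is exactly what reduces the geometry to this single clean case.
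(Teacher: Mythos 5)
Your proof is correct and follows essentially the same route as the paper's: both argue by contradiction on the sequence of faces crossed by a segment joining two non-adjacent vertices, ruling out a single crossed face via adjacency, three or more crossed faces via sparseness of $S$ (two edges of $S$ on the boundary of the middle triangle), and exactly two crossed faces via concavity of the shared edge in $S$, which makes the union a non-convex quadrilateral that cannot contain the complementary diagonal. Your preliminary split of non-edges into removed edges of $T$ versus non-edges of $T$, and your check that colour-$2$ triangular faces of $P(G)$ are intact faces of $T$, are minor refinements of details the paper leaves implicit rather than a different approach.
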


\begin{proof}
	Let $x,y$ be a pair of non-adjacent vertices of $G$, and let $F_1,\dots,F_t$ be the sequence
of the faces of $P(G)$ intersected in this order by the segment $xy$
(faces may appear more than once in this sequence), and let
$\overline{F_1},\dots,\overline{F_t}$ be their closure.
 We have to show that at least one of the faces
$F_1,\dots,F_t$ is not a triangular face colored $2$.
Suppose for contradiction this is not the case and that $F_1,\dots,F_t$ are triangular faces colored $2$.
\begin{itemize}
	\item If $t=1$ then $F_1$ is not a triangular face, contradicting the assumption.
	\item   if $t=2$,
since $\overline{F_1}\cap \overline{F_2}$ lies in $S$ it is concave in $ P(T)$ (and also in $P(G)$),  and $\conv(\overline{F_2}\cup\{x\})=\conv(\overline{F_1}\cup \overline{F_2})$ is a triangle.
This contradicts the assumption that the segment $xy$ is included in $\overline{F_1}\cup \overline{F_2}$, which necessarily implies that $\overline{F_1}\cup \overline{F_2}$ is a convex quadrilateral.
\item If $t\ge3$  then all the edges
$\overline{F_1}\cap \overline{F_2}, \overline{F_2}\cup \overline{F_3}$ lie in $S$ and therefore the boundary of $F_2$ contains two edges in $S$, contradicting the assumption that $S$ is sparse.
\end{itemize}
\qed\end{proof}

\subsection{The proof}\label{s:proof}

Here we derive Theorems~\ref{t:n-3} and \ref{t:mn} from Lemmas~\ref{l:2coloring},\ref{l:key}, and \ref{l:rep}. Note that according to Lemma~\ref{l:trianglefree} we need to prove only the upper bound of \eqref{eq:mn} to establish Theorem~\ref{t:mn}.
  
 Let $n\ge 4$. By Euler's formula every F\'ary drawing of $K_{2,n-2}$ has $n-3$ inner faces.
Each of them is a quadrilateral and therefore has to contain an obstacle.
 Thus,
 $
 \pobs(n)\ge\pobs(K_{2,n-2})\ge n-3$.
 In order to prove Theorems~\ref{t:n-3} and  \ref{t:mn}
  it remains to prove that for a graph $G$ on $n$ vertices and $m$ edges the following inequality holds: 
  $\pobs(n)\le \min(n-3,m-n+1-\lfloor\trfaces(G)/2\rfloor)$.

 First we show that it suffices to prove the upper bound 
 for all connected planar graphs on $n\ge4$ vertices. This will easily follows from the next lemma
 which relies on an invariant closely related to $\pobs(G)$.
 The invariant $\pobs'(G)$ is defined as the minimum number of obstacles lying in inner faces of
 a realization of $G$, where the minimum is taken over all possible realizations of $G$.
 It follows from the definition that $\pobs'(G)=\pobs(G)$ if every realization of $G$ with $\pobs(G)$ obstacles
 uses only obstacles in the inner faces. Otherwise we have $\pobs'(G)=\pobs(G)-1$.
 
 \begin{lemma}\label{l:disconn}
Let $G$ be a disconnected planar graph, and let $A_1,\dots,A_\alpha$ be the components of $G$.
$$
\pobs(G) = \begin{cases}
\sum_{i=1}^\alpha\pobs'(A_i)&\text{if   $\exists A_j$ with $\pobs(A_j)=\pobs'(A_j)>0,$}\\
	1 + \sum_{i=1}^\alpha\pobs'(A_i)&\text{otherwise.}
\end{cases}
$$
%
%
\end{lemma}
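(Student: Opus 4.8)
The plan is to understand how obstacles in disconnected graphs interact through the single outer face. The key observation is that in any realization of $G$, each component $A_i$ is drawn (as a plane graph) inside some face of the arrangement of the other components, and crucially all the components share one common outer face of the whole drawing. Obstacles placed in inner faces of the components are ``local'' and their minimum count for $A_i$ is exactly $\pobs'(A_i)$; these contributions simply add up across components. The only subtlety is the outer region: pairs of nonadjacent vertices belonging to different components, or belonging to one component but separated by another, may need to be blocked, and such blocking must be done in the shared outer area.

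First I would establish the upper bound direction. Given optimal realizations witnessing $\pobs'(A_i)$ for each component, I would show how to assemble them into a single planar visibility representation of $G$ by nesting the components appropriately (placing each $A_i$ far from the others, e.g. in disjoint bounded regions or scaled down and shifted), using the inner-face obstacles of each $A_i$ as given. If at least one component $A_j$ already uses an obstacle in its outer face in its optimal $\pobs(A_j)$-realization --- equivalently $\pobs(A_j) = \pobs'(A_j)>0$ means the optimum is achieved using only inner obstacles, so here I must be careful about which case I am in --- the idea is that one single obstacle in the global outer face can be made to serve all the ``between-component'' visibility requirements simultaneously, provided such an obstacle is genuinely needed. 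This yields the ``$+1$'' in the second case and its absence in the first.

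For the lower bound I would argue that any realization of $G$ restricts to a realization of each $A_i$, so the number of inner-face obstacles used for $A_i$ is at least $\pobs'(A_i)$, giving $\sum_i \pobs'(A_i)$ as a floor on inner obstacles. Then I would analyze whether an outer-face obstacle is forced. Since $G$ is disconnected with $\alpha\ge 2$ components, there exist nonadjacent vertices in distinct components whose connecting segment lies in the outer region of the combined drawing; if none of the components can be realized so that an inner obstacle does ``double duty'' as a blocker in the outer direction, an extra outer obstacle is unavoidable, forcing the $+1$. The case split in the statement is precisely the condition under which some component's optimal realization has the flexibility to absorb this outer requirement: when some $A_j$ satisfies $\pobs(A_j)=\pobs'(A_j)>0$, that component uses an inner obstacle that can be enlarged/repositioned into the outer face, so no new obstacle is needed; otherwise every component's outer region is ``clean'' and a fresh obstacle is required.

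The main obstacle I anticipate is the geometric bookkeeping of the shared outer face: making rigorous the claim that a single obstacle can simultaneously block all inter-component and separated-pair sightlines, and dually that such blocking cannot be achieved for free unless some component donates an inner obstacle. This requires careful use of the fact, stated earlier in the excerpt, that any obstacle can be extended to any connected subset of the interior of its face, together with Tutte's spring embedding to arrange each component with a prescribed convex outer boundary so that the components can be packed without their convex hulls interfering. Reconciling the definition of $\pobs'$ (inner obstacles only) with the assembled drawing's notion of ``inner'' versus ``outer'' face --- since a face that was outer for a standalone component $A_i$ becomes an inner face of $G$ once $A_i$ is nested inside another component's face --- is where the precise case analysis must be pinned down.
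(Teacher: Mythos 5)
Your proposal contains two genuine gaps, one in each direction. In the construction, your case \emph{without} the extra obstacle does not work as described. You place the components far apart in disjoint regions and then claim that when some $A_j$ has $\pobs(A_j)=\pobs'(A_j)>0$, one of its inner obstacles ``can be enlarged/repositioned into the outer face.'' Neither option exists: an obstacle is disjoint from all edges and vertices, hence lies inside a single face of the final drawing, so an obstacle sitting in an inner face of $A_j$ can never be extended into the outer region; and moving it to the outer face is impossible by the very definition of the case, since a realization of $A_j$ with $\pobs(A_j)$ obstacles one of which is outer would witness $\pobs'(A_j)\le\pobs(A_j)-1$, contradicting $\pobs(A_j)=\pobs'(A_j)$. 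The missing idea (the paper's) is the reverse move: leave the inner obstacle of $A_j$ where it is and bring the \emph{other} components to it --- drill one small hole per remaining component inside that obstacle and realize each $A_i$, $i\neq j$, inside ``its'' hole with its $\pobs'(A_i)$ inner obstacles. The surrounding obstacle then blocks all inter-component sightlines and simultaneously plays the role of the outer-face obstacle that each nested $A_i$ may require, giving exactly $\sum_i\pobs'(A_i)$ obstacles; in the remaining case the same hole trick is applied to one fresh big obstacle, giving the $+1$.

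The second gap is in the lower bound. You assert that the per-component floors ``simply add up'': each restriction to $A_i$ uses at least $\pobs'(A_i)$ obstacles in inner faces of $A_i$, hence at least $\sum_i\pobs'(A_i)$ in total. This presupposes that these obstacle sets are disjoint across components, which fails exactly in the nested configurations that the optimal constructions use: if $A_{i'}$ is drawn inside an inner face $F$ of $A_i$, then an obstacle lying in an inner face of $A_{i'}$ also lies in $F$ and would be counted for both components. The paper closes this with an assignment argument you would need to reproduce: among the obstacles inside such a face $F$, at least one does not lie in any face of a component nested in $F$ --- otherwise a vertex on the boundary of $F$ would see a vertex of that nested component --- and that obstacle can be charged to $A_i$ alone. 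The same care is needed for your final ``the $+1$ is forced'' step; the paper makes it precise by taking a component $A_i$ incident to the outer face of the whole drawing and arguing that either it requires an outer obstacle lying in no face of any other component, or its restriction already carries $\pobs'(A_i)+1$ obstacles chargeable to it. Your formulation of that step assumes the disjointness that is precisely what has to be proved.
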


\begin{proof}
For a realization of $G$ with the given number of obstacles,
suppose first that $\pobs(A_j)=\pobs'(A_j)>0$ for some $A_j$.
We realize the component $A_j$ with  $\pobs(A_j)=\pobs'(A_j)$ obstacles. Inside one of its obstacles
we make small individual holes for the remaining components.
Each component $A_i$, $i\neq j$, is realized in ``its'' hole using $\pobs'(A_i)$ obstacles
placed inside its inner faces. The boundary of the hole surrounds the drawing
of $A_i$ so that the surrounding obstacle functions for $A_i$ as an obstacle in the outside face of $A_i$.
If there is no component with $\pobs(A_j)=\pobs'(A_j)>0$ then we proceed similarly as above,
realizing the components inside individual holes of one big obstacle.

It remains to show that $G$ cannot be realized with a smaller number of obstacles.
Consider a realization of $G$, and let $A_i$ be one of its components.
If we consider only the edges of $A_i$, one or more obstacles must appear in at least
$\pobs'(A_i)$ inner faces of the drawing of $A_i$. Let $F$ be one of these faces.
The interior of $F$ may contain other components of $G$ but (at least) one of the obstacles inside $F$
does not lie in any face of a component of $G$ lying inside $F$, as otherwise a vertex of $F$ would see a vertex
of a component lying inside $F$. Assigning this obstacle to $A_i$, we get at least
$\pobs'(A_i)$ obstacles assigned to $A_i$ and to no other component of $G$.
This gives the lower bound $\pobs(G) \geq \sum_{i=1}^\alpha\pobs'(A_i)$.
Suppose now that there is no $A_j$ with $\pobs(A_j)=\pobs'(A_j)>0$. Then there must be a component $A_i$
incident to the outer face of the drawing of $G$. We have $\pobs(A_i)=\pobs'(A_i)=0$ or
$\pobs(A_i)>\pobs'(A_i)$. It follows that either we need an additional obstacle in the outer face of $A_i$ which does not lie inside
another component of $G$, or at least $\pobs(A_i)\geq\pobs'(A_i)+1$ faces of $A_i$ contain obstacles and we may assign
at least $\pobs'(A_i)+1$ obstacles to the component $A_i$.
\qed\end{proof}

Observe that $\pobs'(C_4)=1$ and $\pobs'(G)=0$ for any other connected graph $G$ on at most four vertices.
Therefore, due to Lemma~\ref{l:disconn} and to the trivial inequality $\pobs'(G)\leq\pobs(G)$,
it suffices to prove the upper bounds in Theorems~\ref{t:n-3} and  \ref{t:mn}
 for connected planar graphs (on $n\ge4$ and $n\ge 5$ vertices, respectively).

  Let $G$ be a connected planar graph on $n\ge4$ vertices.
  We want to show that $\pobs(G)\le n-3$. Let $F(G)$ be a F\'ary drawing of $G$.
  We distinguish the following two cases:
 either $F(G)$ has a triangular face,
  	or no face of $F(G)$ is triangular.

In the latter case, $m\leq 2n-4$ thus $m-n+1\leq n-3$. Hence by Lemma~\ref{l:trianglefree} $\pobs(G)\le n-3$.

In the rest of the proof we suppose that $F(G)$ has at least a triangular face $\{a,b,c\}$.
By adding straight-line edges to $F(G)$ we obtain a F\'ary drawing of a triangulation further denoted $T$.

Let $\chi$ be a $2$-coloring of the faces of $T$ satisfying Lemma~\ref{l:2coloring},
and let $S$ be the set of edges of $T$ contained in a pair of faces of the same color.
Due to Lemma~\ref{l:key}, $T$ has an $(S;a,b,c)$-concave drawing $P(T)$.
Let $P(G)$ be the F\'ary drawing of $G$ obtained from $P(T)$ by the removal of all the edges of $E(T)\setminus E(G)$.

We denote the colors of $\chi$ by $1$ and $2$.
We partition the set of the inner faces of $P(G)$ into the following three subsets:

$I_1:=$ the set of the triangular inner faces of $P(G)$ having color $1$,

$I_2:=$ the set of the triangular inner faces of $P(G)$ having color $2$,

$I_3:=$ the set of the non-triangular inner faces of $P(G)$.

Since $P(T)$ has $2n-5$ inner faces and each face in $I_3$ is the union of at least two faces of $P(T)$, we have
$ |I_1|+|I_2|+2|I_3|\le 2n-5$.
It follows that
$\min\{|I_1|+|I_3|,|I_2|+|I_3|\}\le \lfloor (2n-5)/2\rfloor = n-3$.
Also we have
$|I_1|+|I_2|=\trfaces(G)-1$
and
$|I_3|=m-n+2-\trfaces(G) $. 
Hence $|I_1|+|I_2|+2|I_3|=2(m-n+1)-(\trfaces(G)-1) $, and
$$\min\{|I_1|+|I_3|,|I_2|+|I_3|\}\le m-n+1-\lfloor\trfaces(G)/2\rfloor.$$

Without loss of generality we suppose that
$$|I_1|+|I_3|\le \min(n-3,m-n+1-\lfloor\trfaces(G)/2\rfloor).$$
To prove $\pobs(G)\le \min(n-3,m-n+1-\lfloor\trfaces(G)/2\rfloor)$ --- from which Theorems~\ref{t:n-3} and \ref{t:mn}
follow ---
 it now suffices to prove that taking the F\'ary drawing $P(G)$
and placing an obstacle in each face of $I_1\cup I_3$ gives a representation of $G$, what follows from Lemma~\ref{l:rep}.
This finishes the proof of Theorem~\ref{t:n-3}. 
%
The proof of Lemma~\ref{l:key}, which will be included in the full version of the manuscript, is only sketched here.

\subsection{Sketch of the proof of Lemma~\ref{l:key}}\label{s:lemmakey}
We proceed by induction on $n$. The case $n=3$ is trivial and the case $n=4$ is easy. Suppose now that $n>4$.
If $S$ is empty or contains only an edge of the triangle $abc$
then any F\'ary drawing of $T$ with the outer face $abc$ is $(S;a,b,c)$-concave.
Suppose now that an edge $e\in S$ is not an edge of the triangle $abc$.
Let $\text{Tr}$ be the set of all triangles of $T$ containing the edge $e$. We fix a F\'ary
drawing $F(T)$ of $T$ with the outer face $abc$.
The set $\text{Tr}$ can be partitioned into two sets $\text{Tr}^+$ and $\text{Tr}^-$, such that $\text{Tr}^+$
contains the triangles of $\text{Tr}$ lying on one side of $e$ and $\text{Tr}^-$ contains the triangles of $\text{Tr}$
lying on the other side of $e$. Since $T$ is a triangulation and $e$ is an inner edge in $F(T)$, the set
$\text{Tr}^+$ contains a unique face $t^+$ of $F(T)$. Similarly, $\text{Tr}^-$ contains
a unique face $t^-$ of $F(T)$. Let $t^{++}$ be the unique triangle in $\text{Tr}^+$ containing all
the other triangles of $\text{Tr}^+$ in the drawing $F(T)$. The triangle $t^{--}$ is defined analogously.
Note that if $|\text{Tr}^+|=1$ then $t^{++}=t^+$. Analogously,
if $|\text{Tr}^-|=1$ then $t^{--}=t^-$.

\begin{center}
	\includegraphics[height=.15\textheight]{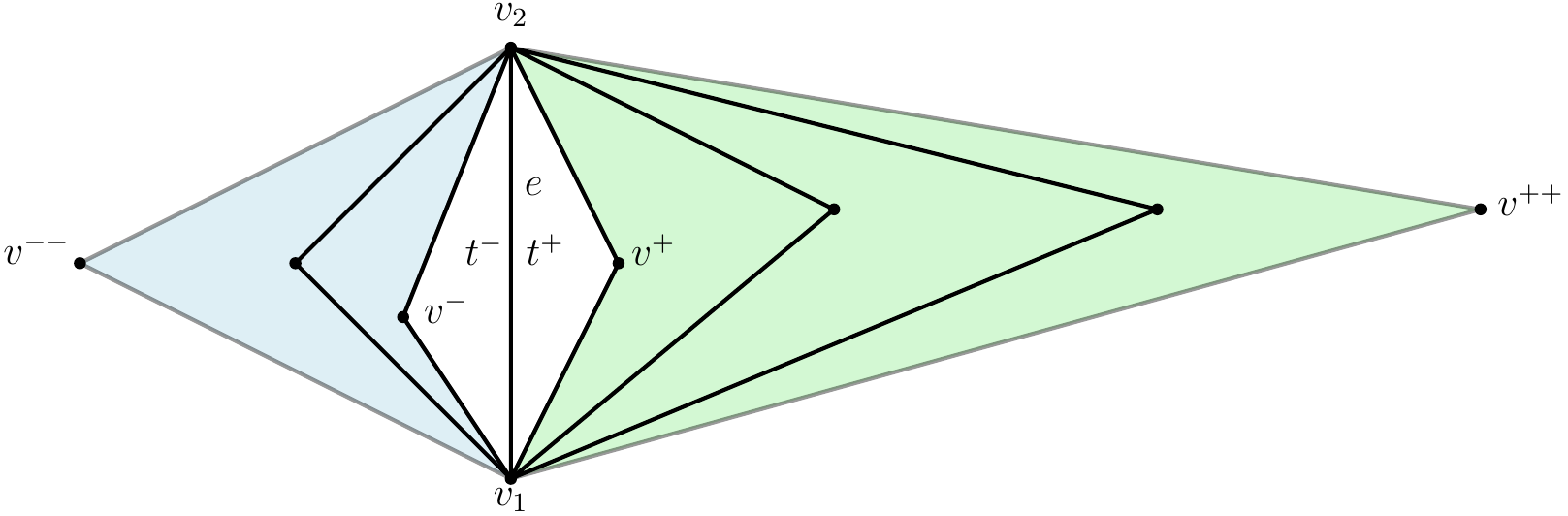}
\end{center}

Without loss of generality, we may assume that in clockwise order one finds $v^{++},v_1,v_2$ for the triangle $t^{++}$, $v^+,v_1,v_2$ for the triangle $t^+$,
 $v^{--},v_2,v_1$ for the triangle $t^{--}$, and  $v^-,v_2,v_1$ for the triangle $t^-$.

We define a triangulation $T\odot e$ as the triangulation obtained from $T$ by {\em the planar
contraction of $e$ with respect to $F(T)$\/}, i.e., it is the graph obtained from $T$ by the following two operations:
(i) removal of all the vertices and edges lying inside the triangles
$t^{++}$ and $t^{--}$ in $F(T)$, and (ii) contraction of $e$.
Thus, $e=v_1v_2$ is contracted to a new vertex $v_e$ and the triangles $t^{++}$ and $t^{--}$
(including their interiors in $F(T)$) are replaced by the new edges $v^{++}v_e$ and $v^{--}v_e$, respectively.
The drawing $F(T)$ and the construction of $T\odot e$ immediately give
a (topological) planar drawing $Dr(T\odot e)$ of $T\odot e$ with each face being a triangle and with the outer face $abc$.
Let $T^+$ denote the triangulation consisting of the vertices and edges
lying in the triangle $t^{++}$ in the drawing $F(T)$.
Analogously, let $T^-$ denote the triangulation consisting of the vertices and edges
lying in the triangle $t^{--}$ in the drawing $F(T)$.
Further, let $S^+:=S\cap E(T^+)$ and $S^-:=S\cap E(T^-)$.

We can prove the following two observations:
(1) $S^+$ is sparse in $T^+$, and $S^-$ is sparse in $T^-$; (2) $S_0$ is sparse in $T\odot e$.

Hence we can apply the inductive hypothesis (i) on $T\odot e$ and $S_0$,
(ii) on $T^+$ and $S^+$, and (iii) on $T^-$ and $S^-$.
In the first case we get an $(S_0;a,b,c)$-concave drawing $P(T\odot e)$ of $T\odot e$ with the outer
face $abc$.
In the second case we get an $(S^+;v^{++},v_1,v_2)$-concave drawing $P(T^+)$ of $T^+$ with the outer
face $t^{++}=v^{++}v_1v_2$.
In the third case we get an $(S^-;v^{--},v_2,v_1)$-concave drawing $P(T^-)$ of $T^-$ with the outer
face $t^{--}=v^{--}v_2v_1$.
Our construction of an $(S;a,b,c)$-concave drawing of $T$ is obtained by properly combining the drawings
$P(T\odot e)$, $P(T^+)$ and $P(T^-)$.
\def\eps{\varepsilon}
Starting with $P(T\odot e)$, we replace the vertex $v_e$ by a short segment $e$ and
the two edges $v_ev^{++}$ and $v_ev^{--}$ by skinny copies  of $P(T^+)$ and $P(T^-)$, respectively,
in such a way that the edge $e$ is concave in the resulting drawing, thus proving  Lemma~\ref{l:key}
 (details omitted here). \hfill\qed

\section{Obstacle number of Intersection Graphs of Segments}

Generally, it is an interesting question to characterize graphs with obstacle number $1$, and specifically those graphs that can be represented using a single unbounded obstacle.

It is easily seen that a graph has such a representation if and only if it is an induced subgraph of the visibility graph of a simple polygon. However, no characterization is known for visibility graphs of  simple polygons. However one can give some interesting characterization in a special case:
\begin{lemma}
\label{lem:obst1}
A graph $G$ has a representation as a visibility graph with a single unbounded obstacle and vertices in convex position if and only if there exist functions $p,I$ mapping the vertex set of $G$ to points  (resp. to circular arcs)
of $\mathbb S^1$, in such a way that
\begin{itemize}
	\item for every vertex $v$ of $G$ it holds $p(v)\notin I(v)$;
	\item for every distinct vertices $u,v$ of $G$, it holds that $u$ and $v$ are adjacent if and only if  $p(u)\in I(v)$ and $p(v)\in I(u)$.
\end{itemize}
\end{lemma}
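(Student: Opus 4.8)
The plan is to prove the two implications separately, thinking of $\mathbb S^1$ as a circle that simultaneously carries the cyclic positions of the vertices and, for each vertex $v$, an angular ``window'' $I(v)$ through which $v$ is permitted to see. The two membership conditions should be read as saying that a segment $uv$ is unblocked exactly when it is admitted by the windows at \emph{both} endpoints; since ordinary visibility is symmetric, this pair of conditions is itself symmetric in $u$ and $v$, as it must be. The direction that is actually needed downstream (to produce representations of PURE-2-DIR graphs) is the construction from the combinatorial data, so I would treat that one in full and the converse more structurally.

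For ``combinatorial data $\Rightarrow$ representation'' I would argue directly. Place each vertex $v$ at the point $p(v)$ of a fixed circle $\mathcal C$, perturbing coincident points to distinct positions in general position; a coincidence $p(u)=p(v)$ is harmless because then $p(v)=p(u)\notin I(u)$, so $u\not\sim v$ anyway. The elementary fact I would use is the inscribed-angle correspondence: for $v\in\mathcal C$, the map sending a position $q\in\mathcal C\setminus\{p(v)\}$ to the direction of the ray $vq$ is a monotone homeomorphism onto the open half-circle $J_v$ of inward directions at $v$. Hence the arc $I(v)$ (which omits $p(v)$) is carried to a single open cone of directions $W(v)\subseteq J_v$, and $v$ should see $u$ from its side exactly when the direction $v\to u$ lies in $W(v)$, i.e.\ when $p(u)\in I(v)$. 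To realise this I would install, at radius $\varepsilon$ around each vertex $v$, a thin circular-arc wall covering the complementary arc $\mathbb S^1\setminus W(v)$ (a single arc, since $W(v)$ is an arc; the back directions it also covers point away from the disk and block nothing relevant). Choosing $\varepsilon$ smaller than the least distance from any vertex to any non-incident chord guarantees that the wall at $v$ meets only segments emanating from $v$. Finally I would splice all these little walls together through the region outside $\mathcal C$ and run one tail off to infinity, yielding a single connected unbounded obstacle. Then $uv$ is clear iff neither wall occludes it, i.e.\ iff $p(u)\in I(v)$ and $p(v)\in I(u)$.

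The reverse implication ``representation $\Rightarrow$ combinatorial data'' is where the real work lies, and I expect it to be the main obstacle. Given $P$ in convex position and a single connected unbounded obstacle $O$, only $O\cap\conv(P)$ matters: each of its components is a ``tentacle'' that must meet $\partial\conv(P)$ (a component strictly inside $\conv(P)$ could not reach the unbounded part of $O$, since the connecting path stays in $O\cap\conv(P)$ until it hits the boundary), and a chord is blocked precisely when it crosses a tentacle. The goal is to read off one point $p(v)$ and one arc $I(v)$ per vertex from this tentacle system. I would first record the consequences of connectedness and convexity -- for instance, that if the open segment $uv$ meets $O$ then the directions $u\to v$ and $v\to u$ both lie in the corresponding shadows of $O$ -- and, more usefully, attribute each blockage to the way a tentacle separates the two endpoints along $\partial\conv(P)$. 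The plan is then to show that the induced family of separations is laminar (nested) when traced around the boundary, so that it can be encoded by a single point and a single arc per vertex.

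The principal difficulty, and the reason the cyclic order on $\mathbb S^1$ cannot simply be taken to be the convex-position order, already appears on four points: an obstacle can block $vw_2$ and $w_2w_3$ while leaving $vw_1$, $vw_3$, $w_1w_2$, $w_1w_3$ clear, which is the paw graph; in the convex order this has no arc representation (an arc meeting both $w_1$ and $w_3$ must contain either $w_2$ or $v$), yet after reordering it does. Hence the forward proof must first \emph{extract the correct cyclic order} from the separation structure of $O$ and only then fit the windows. I would attempt this either by induction -- deleting a vertex, representing the rest, and reinserting the vertex at a position common to all its neighbours' arcs -- or by a single sweep of $\partial\conv(P)$ that turns the tentacle attachments into the endpoints of the arcs. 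Proving that a consistent insertion position always exists, equivalently a Helly-type common point of the neighbours' arcs, is the crux, and it is exactly here that connectedness of the \emph{single} obstacle, rather than the naive ``one tentacle per blocked chord'' picture, has to be exploited.
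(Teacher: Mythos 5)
Your first direction (functions $p,I$ $\Rightarrow$ representation) is sound, and is in fact more detailed than the paper, which dismisses that implication as clear by analogy with the construction for segment intersection graphs. The problem is the converse, which you explicitly leave unfinished, and the plan you sketch for it rests on a false premise: your paw-graph example does not show that the convex-position order fails. Take the cyclic order $v,w_1,w_2,w_3$, with edges $vw_1,vw_3,w_1w_2,w_1w_3$ and non-edges $vw_2,w_2w_3$, and put $I(v)=$ the arc from $p(w_1)$ through $p(w_2)$ to $p(w_3)$, $I(w_1)=$ the arc through $p(w_2),p(w_3),p(v)$ avoiding $p(w_1)$, $I(w_3)=$ the arc containing only $p(v),p(w_1)$, and $I(w_2)=$ a tiny arc around $p(w_1)$. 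It is true, as you argue, that $I(v)$ is forced to contain $p(w_2)$; but this is harmless, because adjacency in the lemma requires membership \emph{on both sides}, so the non-adjacency of $v$ and $w_2$ is certified by $p(v)\notin I(w_2)$ alone. You have implicitly required $I(v)$ to contain exactly the neighbours of $v$, which is stronger than what the lemma asks for.

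Once this is seen, the laminarity/Helly machinery you anticipate is unnecessary, and the harder implication has a short proof, which is what the paper does: reduce (without loss of generality) to the case where the obstacle is the exterior of a simple polygon with the vertices of $G$ on its boundary, set $p(v_k)=e^{2\pi i k/n}$ in the polygon order, and let $I(v_k)$ be the inclusion-minimal arc containing all neighbours of $v_k$ but not $p(v_k)$. The only nontrivial point is the converse inclusion: if, in the circular order, $v_i$ lies between two neighbours $v_{i_1},v_{i_2}$ of $v_j$ and $v_j$ lies between two neighbours $v_{j_1},v_{j_2}$ of $v_i$, then $v_i$ and $v_j$ are adjacent. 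This follows from exactly the connectedness argument you gesture at but never deploy: the visibility segments $[v_i,v_{j_1}],[v_i,v_{j_2}],[v_j,v_{i_1}],[v_j,v_{i_2}]$ bound a quadrangular region containing the segment $v_iv_j$; any piece of obstacle inside that region would be cut off from infinity by obstacle-free segments, contradicting that the single obstacle is connected and unbounded. So the ``crux'' you identify (extracting a new cyclic order and a consistent insertion point) is a non-issue, and your proposal as written does not establish the implication from a representation to the functions $p,I$.
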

\begin{proof}
	The existence of a visibility representation from functions $p$ and $I$ is clear, and follows the same ideas as the representation for intersection graphs of horizontal and vertical segments.
	
	Conversely, assume that $G$ is representable as a visibility graph using a single unbounded obstacle, and consider such a representation. Without loss of generality, we can assume that the obstacle is the exterior of a simple polygon, on which lie the vertices of $G$.
	 Let $v_1,\dots,v_n$ be the vertices of $G$ in the order in which they appear on the polygon (say clockwise), and let $i\neq j$. 
	 Let us now prove that $v_i$ and $v_j$ are adjacent if and only if there exist $i_1,i_2$ and $j_1,j_2$ such that
	 \begin{itemize}
	 	\item in circular order, $i$ is between $i_1$ and $i_2$ 
	 	and $j$ is between $j_1$ and $j_2$;
	 	\item $v_i$ is adjacent to both $v_{j_1}$ and $v_{j_2}$
	 	and $v_j$ is adjacent to both $v_{i_1}$ and $v_{i_2}$.
	 \end{itemize} 
	 If $v_i$ and $v_j$ are adjacent, one can let $i_1=i_2=i$ and $j_1=j_2=j$. Conversely, if $i_1,i_2,j_1,j_2$ have the properties above, then let $x$ (resp. $y$) be the intersection of segments $[v_i,v_{j_2}]$ and $[v_j,v_{i_1}]$ (resp. of segments $[v_i,v_{j_1}]$ and $[v_j,v_{i_2}]$). Then the region $R$ delimited by the quadrangle $(x,v_i,y,v_j)$ does not meet the obstacle, thus $v_j$ is visible from $v_i$, that is $v_i$ and $v_j$ are adjacent.
	 
	 Let us now define functions $p$ and $I$: for $1\leq k\leq n$ let 
	 $p(v_k)=e^{ik2\pi/n}$ and $I(v_k)$ be the inclusion minimum circular arc containing all the neighbours of $v_k$ but not $v_k$. It follows from the property above that the functions $p$ and $I$ satisfy the requirements of the Lemma.
	 \qed\end{proof}

\begin{figure}[h]
	\begin{center}
		\includegraphics[width=.4\textwidth]{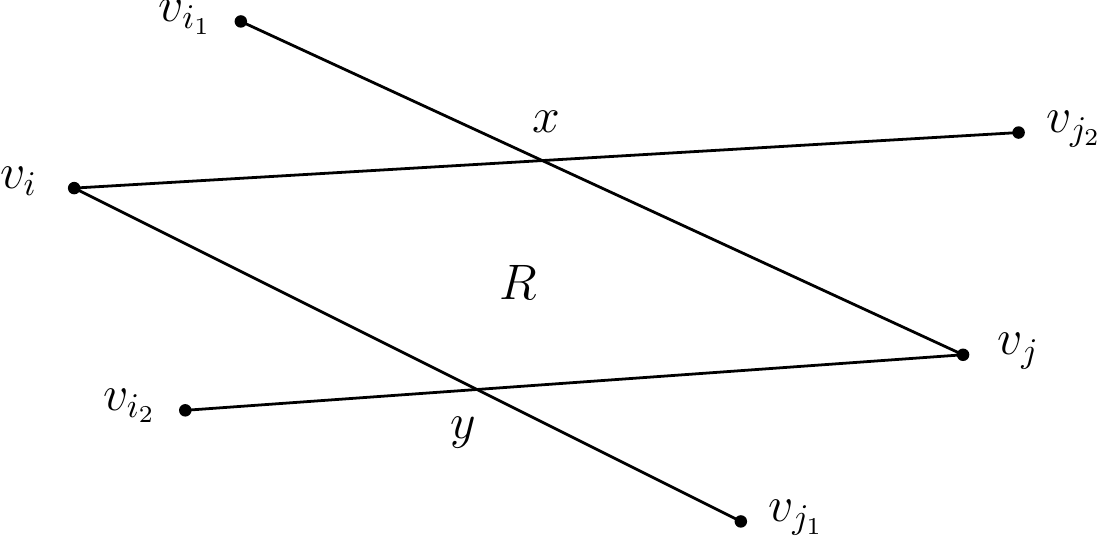}
	\end{center}
	\caption{Illustration for the proof of Lemma~\ref{lem:obst1}.}
	\label{fig:obst1}
\end{figure}

We now prove Theorem~\ref{thm:2DIR}, which states that
	the obstacle number $\obs(G)$ of a  PURE-2-DIR graph $G$ can be computed as follows:
	\[
	\obs(G)=\begin{cases}
	0&\text{if $G$ is $K_1$ or $K_2$},\\
	1&\text{ otherwise.}
	\end{cases}
	\]
	and then deduce Theorem~\ref{thm:bip}.	
	
\begin{proof}[Proof of Theorem~\ref{thm:2DIR}]
	Let $G$ be a PURE-2-DIR graph. Without loss of generality, we can assume that segments are either horizontal or vertical, and that they are numbered from  top to bottom and left to right. Hence, denoting $a_1,\dots,a_p$ the vertices corresponding to the horizontal segments and by 
	$b_1,\dots,b_q$ the vertices corresponding to the vertical segments, it is easily checked that $a_i$ is adjacent to $b_j$ if and only if there exist $i_1\leq i\leq i_2$ and $j_1\leq j\leq j_2$ such that $a_{i_1}$ and $a_{i_2}$ are adjacent to $b_j$ and $b_{j_1}$ and $b_{j_2}$ are adjacent to $a_i$. The result then follows from Lemma~\ref{lem:obst1} (see
	Fig.~\ref{fig:2dir} for an example of a single obstacle representation of a PURE-2-DIR graph).
	\qed\end{proof}

\begin{figure}
	\begin{center}
		\includegraphics[width=\textwidth]{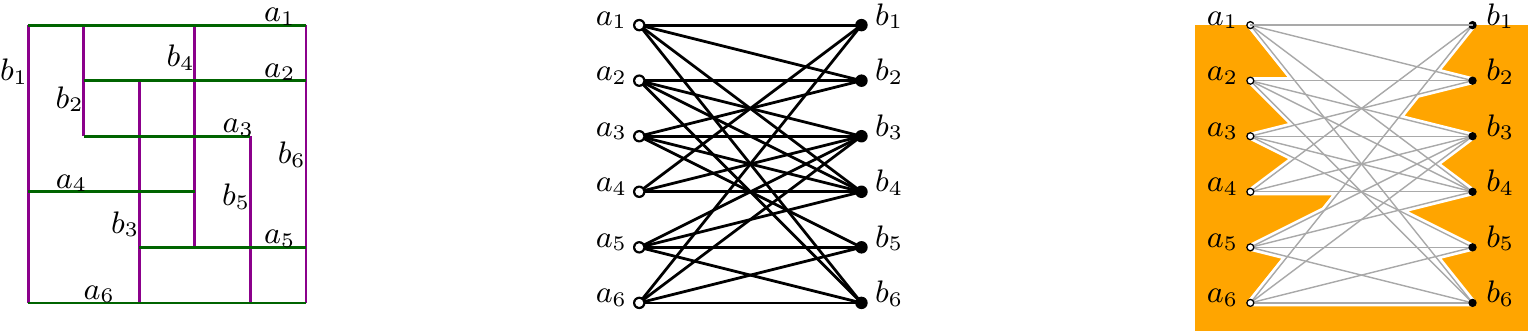}
	\end{center}
\caption{Construction of a representation of a graph $G\in\text{$2$-DIR}$ with a single obstacle.}
\label{fig:2dir}
\end{figure}

\begin{proof}[Proof of Theorem~\ref{thm:bip}]
 	It has been proved in \cite{Taxi_janos_t} that every bipartite planar graph can be represented as PURE-2-DIR graph. Thus it directly follows from Theorem~\ref{thm:2DIR} that the obstacle number of every planar bipartite graph different from $K_1$ and $K_2$ is $1$.
\qed\end{proof}
Note that the linear time algorithm to compute a representation of a bipartite planar graph as a PURE-2-DIR graph given in \cite{Taxi_left} can be easily modified to provide a linear time algorithm providing a representation of a planar bipartite graph as the visibility graph of a family of points with a single polygonal obstacle.

\subsubsection{Acknowledgments}
 We thank V{\'\i}t Jel{\'\i}nek for ideas leading to a simplification of our proof.
We also thank Daniel Kr\'a{\v l}  
and Roman Nedela for inspiring comments on our research.



\end{document}